\documentclass[a4paper,10pt]{amsart}

\usepackage[utf8]{inputenc}
\usepackage[T1]{fontenc}
\usepackage{lmodern}
\usepackage{microtype}

\usepackage{hyperref}

\usepackage[english]{babel}
\usepackage{amsmath}
\usepackage{amssymb}
\usepackage{mathrsfs}
\usepackage{amsthm}
\usepackage[abbrev]{amsrefs}
\usepackage{nameref}
\usepackage{cleveref}
\usepackage{enumitem}
\usepackage{tikz}

\usepackage[left=3.3cm,right=3.3cm,top=4.5cm,bottom=3.5cm]{geometry}

\theoremstyle{plain}
\newtheorem{proposition}{Proposition}[section]
\newtheorem{lemma}[proposition]{Lemma}
\newtheorem{theorem}[proposition]{Theorem}
\newtheorem{corollary}[proposition]{Corollary}
\newtheorem{definition}[proposition]{Definition}

\theoremstyle{definition}
\newtheorem{example}[proposition]{Example}
\newtheorem*{counterexample}{Counterexample}
\theoremstyle{remark}

%Configuration des références

\crefname{proposition}{proposition}{propositions}
\crefname{lemma}{lemma}{lemmas}
\crefname{theorem}{theorem}{theorems}
\crefname{corollary}{corollary}{corollaries}

% Proof for a claim

% Commandes mathématiques personnelles 
\newcommand{\SymmSpace}
	{\mathscr{S}}
\newcommand{\reals}
	{\mathbf{R}}
\newcommand{\integers}
	{\mathbf{N}}
\newcommand{\ambSp}
	{\reals^{d}}
\newcommand{\sphere}
	{{S^{d-1}}}
\newcommand{\Proj}
	{{\mathbf{P}^{d-1}_ {\reals}}}
\newcommand{\inertie}
	{\mathcal{A}}
\newcommand{\admissible}
	{\mathscr{X}}
\newcommand{\SteinerSymm}
	{\mathcal{S}}
\newcommand{\PolarSymm}
	{\mathcal{H}}
\newcommand{\CapSymm}
	{\mathcal{L}}

\newcommand{\controlSet}
	{{\overline{X^\SymmSpace_{\varepsilon}}}}
\newcommand{\support}
	{\textrm{supp}}
\newcommand{\measurable}
	{\mathcal{M}_\sharp}
\newcommand{\limitPoint}
	{{\mathfrak{s}_\star}}
\newcommand{\Haus}
	{\mathscr{H}}
\newcommand{\Sect}
	{{\upharpoonright}}
\DeclareMathOperator{\dif}
	{d\!}
\newcommand{\boundary}[1]
	{{\mathcal{I}^{({#1})}}}

\newcommand{\term}[1]
	{\emph{#1}}

\title[Approximation of symmetrizations by Markov processes]{Approximation of symmetrizations\\by Markov processes}
\date{July 29, 2015}
\keywords{Geometric symmetrizations, Asymmetry of sets and functions, Convergence of rearrangements, Markov processes of operators, Nonlinear random projectors}
\subjclass[2010]%
% 47xx Operator Theory
%   47H20 Semigroups of nonlinear operators
%   47H40 Random operators
% 60xx Probability theory and stochastic processes
%   60D05 Geometric probability and stochastic geometry
%   60J05 Discrete-time Markov processes on general state spaces
{60D05 (47H20,47H40,60J05)}

\author{Justin Dekeyser}
\address[Justin Dekeyser]{Université~catholique~de~Louvain,
Département~de~Mathématique,\newline
2~Chemin~du~Cyclotron, 1348~Louvain-La-Neuve, \textsc{Belgium}}
\email[Justin Dekeyser]{Justin.Dekeyser@uclouvain.be}

\author{Jean Van Schaftingen}
\address[Jean Van Schaftingen]{Université~catholique~de~Louvain,
Département~de~Mathématique,\newline
2~Chemin~du~Cyclotron, 1348~Louvain-La-Neuve, \textsc{Belgium}}
\email[Jean Van Schaftingen]{Jean.VanSchaftingen@uclouvain.be}

\begin{document}

\begin{abstract}
Under continuity and recurrence assumptions,
we prove that the iteration of successive partial symmetrizations
that form a time-homogeneous Markov process, converges to a symmetrization.
We cover several settings, including
the approximation of the spherical nonincreasing rearrangement by
Steiner symmetrizations, polarizations and cap symmetrizations.
A key tool in our analysis is a quantitative
measure of the asymmetry.
\end{abstract}
\maketitle

\section{Introduction and main results}

\subsection{Approximation by Steiner symmetrizations}

Steiner symmetrizations are measure-preserving transformations of sets
that bring symmetry with respect to one direction $u\in\Proj$ in the Euclidean space~\cite{Baernstein}.
The resulting set $X^u$ is symmetric with respect to the direction $u$.
It was observed in the study of the classical isoperimetric inequality that
any Borel measurable set $X\subseteq\ambSp$ which is left invariant under
all Steiner symmetrizations must be an Euclidean ball centered on the
origin~\cite{Steiner}.

A natural question is whether the spherical nonincreasing rearrangement, which
associates to each Borel measurable set $X$ the unique Euclidean ball $X^\star$ centered on
$0$ and with the same measure as $X$, can be
approximated by Steiner symmetrizations, that is whether there exists a sequence
$(u_n)_{n\in\integers}$ such that the sequence of successive Steiner symmetrizations $(X^{u_1\dots u_n})_{n\in\mathbb{N}}$ converges somehow to the spherical nonincreasing
rearrangement $X^\star$.
Such results have been obtained in order to prove various properties of
symmetrizations~\citelist{\cite{Brascamp}\cite{LiebLoss}*{proof of theorem 3.7}}.
The approximation procedure seems quite robust, and this brings the question whether \emph{random
sequences} of partial symmetrizations approximate symmetrizations.

Independent random symmetrizations of sets and functions were studied
in various settings~\citelist{\cite{vanSchaftApproxSymm}\cite{Volcic}\cite{BurchardFortier}\cite{ManiLevitska}\cite{CoupierDavydov}}, and
rates of convergence were recently discovered~\citelist{\cite{BurchardFortier}*{Corollary~5.4, Proposition~6.2}\cite{CoupierDavydov}*{Theorem~3}}.
A typical result for the convergence of independent Steiner symmetrizations is:
\begin{theorem}\label{mainTheoremAlt}
Let $(S_n)_{n\in\integers}$ be a sequence of independent and $\mu$-identically distributed
sequence of Steiner symmetrizations. We have
	\begin{equation}\label{mainTheoremAlt:Equation}
		\mu( \{ u\in\Proj: m(X\Delta X^u) > 0 \} ) > 0 ,
	\end{equation}
for every Lebesgue measurable set $X\subseteq\ambSp$ of finite measure
with $m(X\Delta X^\star)>0$, if and only if,
for every Lebesgue measurable set $X\subseteq\ambSp$ of finite measure,
the sequence of successive Steiner symmetrizations
$(X^{S_1\dots S_n})_{n\in\integers}$ converges almost-surely in
measure to $X^\star$.
\end{theorem}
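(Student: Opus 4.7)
The plan is to prove the easier ``only if'' direction by contraposition and then to attack the ``if'' direction via a Lyapunov / quantitative-asymmetry argument, in line with the ``quantitative measure of the asymmetry'' announced in the abstract. If \eqref{mainTheoremAlt:Equation} fails for some Lebesgue measurable $X\subseteq\ambSp$ with $m(X\Delta X^\star)>0$, then $X^u=X$ up to a null set for $\mu$-a.e.\ direction $u\in\Proj$. Applied iteratively along independent draws, the Markov chain $(X^{S_1\dots S_n})_{n\in\integers}$ stays constantly equal to $X$ with probability one and therefore cannot converge in measure to $X^\star$, which gives the contrapositive.

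For the ``if'' direction, I would introduce a nonnegative functional $\inertie$ on Lebesgue measurable sets of finite measure enjoying three features: (i) monotonicity, $\inertie(X^u)\leq \inertie(X)$ for every Steiner symmetrization, with equality if and only if $X^u=X$ up to a null set; (ii) a strict minimum, $\inertie(X)=0$ iff $X=X^\star$; (iii) continuity with respect to convergence in measure of sets on a fixed ambient ball. A natural candidate is a moment-type asymmetry such as $\int_{\ambSp} \lvert x\rvert^{2}\bigl(\mathbf{1}_{X^{\star}}-\mathbf{1}_{X}\bigr)$. By (i) the random sequence $\bigl(\inertie(X_n)\bigr)_n$, with $X_n:=X^{S_1\dots S_n}$, is nonincreasing and hence converges almost surely. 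By measure preservation and tightness (after replacing $X$ by $X\cap B$ for a large ball $B$ containing $X^\star$), along almost every $\omega$ one can extract a subsequence of $(X_n)$ converging in measure to some limit $X_\infty(\omega)$, and by (iii) one has $\inertie(X_\infty)=\lim_n \inertie(X_n)$.

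The main step, and where I expect the chief technical obstacle to lie, is to upgrade monotone convergence of $\inertie(X_n)$ into a statement that $X_\infty$ is $\mu$-almost surely invariant under a random Steiner symmetrization. The time-homogeneous Markov structure of the chain and the strict monotonicity in (i) should force $\mu\bigl(\{u:m(X_\infty\Delta X_\infty^u)>0\}\bigr)=0$ almost surely; this is the step that genuinely uses the recurrence assumption implicit in \eqref{mainTheoremAlt:Equation} and is the stochastic counterpart of the deterministic ``limit is fully symmetric'' argument found in \cite{Brascamp} and \cite{LiebLoss}. Once this is established, hypothesis \eqref{mainTheoremAlt:Equation} rules out $X_\infty\neq X^\star$, so $\inertie(X_n)\to 0$ almost surely; the quantitative nature of $\inertie$ (via (ii) and a suitable modulus relating $\inertie(Y)$ to $m(Y\Delta Y^\star)$) then upgrades this into convergence in measure of the whole sequence $X_n$ to $X^\star$. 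Making the extraction of $X_\infty$ measurable in $\omega$, and correctly handling the conditional law of the next random symmetrization given the past, is where I expect the bulk of the technical work to concentrate.
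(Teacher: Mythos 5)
Your ``only if'' direction (contraposition: if \eqref{mainTheoremAlt:Equation} fails for some $X$, the iterated sets stay a.s.\ equal to $X$) matches the paper's argument exactly.

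For the ``if'' direction you take a genuinely different route. The paper does not extract a subsequential limit and then show it is invariant; instead it feeds the problem into an abstract summability criterion (the Burchard--Fortier trick, \cref{theorem.BF}): with the asymmetry $\inertie(Y)=d(Y,Y^\star)$ and the deterministic stopping times $N_n=n$, one only needs to show that
\[
\inf\Big\{\int_{\SymmSpace}\big(\inertie(Y)-\inertie(Y^s)\big)\,\mu(\dif s):Y\in\overline{X^\SymmSpace},\ \inertie(Y)\ge\inertie(X^\star)+\varepsilon\Big\}>0,
\]
which follows at once from compactness of the orbit closure $\overline{X^\SymmSpace}$ (established separately, \cref{propertiesOfSymm}), continuity of $Y\mapsto\int(\inertie(Y)-\inertie(Y^s))\,\mu(\dif s)$, and the assumption \eqref{mainTheoremAlt:Equation}. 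That uniform lower bound forces the series in \eqref{eqn.BF} to diverge, hence the indicator $\chi_{\{\inertie(X_n)\ge\inertie(X^\star)+\varepsilon\}}$ vanishes after finitely many steps a.s.; no invariant limit ever needs to be produced.

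There is a genuine gap in your proposal precisely at the step you flag yourself: you assert that ``the Markov structure and strict monotonicity should force $\mu(\{u:m(X_\infty\Delta X_\infty^u)>0\})=0$ almost surely,'' but you do not supply the argument, and the naive reasoning does not work. Monotone a.s.\ convergence of $\inertie(X_n)$ gives $\inertie(X_n)-\inertie(X_{n+1})\to 0$ a.s., but that is a statement about the realized increment, not about the conditional one $g(X_n):=\mathbb{E}\{\inertie(X_n)-\inertie(X_{n+1})\mid\mathcal F_n\}=\int_{\SymmSpace}(\inertie(X_n)-\inertie(X_n^s))\,\mu(\dif s)$, which is what you actually need to drive to zero to get invariance of a subsequential limit. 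To bridge this you need, in one form or another, exactly the quantitative ingredient the paper isolates: either the telescoping bound
\[
\sum_{n}\mathbb{E}\big[g(X_n)\big]\le\inertie(X)-\inertie(X^\star)<+\infty,
\]
which gives $g(X_{n_k})\to 0$ a.s.\ along a (deterministic) subsequence, followed by compactness and continuity of $g$ to identify the limit; or directly the uniform positive lower bound for $g$ on the compact set $\{Y\in\overline{X^\SymmSpace}:\inertie(Y)\ge\inertie(X^\star)+\varepsilon\}$. Either way, compactness of the orbit closure and continuity are indispensable and neither is mentioned in your sketch. A secondary issue: your proposed functional $\int_{\ambSp}|x|^2(\mathbf 1_{X^\star}-\mathbf 1_X)$ need not be finite for an arbitrary finite-measure $X$, which is why the paper works with the bounded density $|x|^2/(1+|x|^2)$; and note that \eqref{mainTheoremAlt:Equation} is a discrimination hypothesis, not a recurrence one --- in the i.i.d.\ case recurrence is automatic and plays no role here.
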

In this paper we investigate the approximation by \emph{time-homogeneous
Markov processes}. A stochastic process $(S_n)_{n\in\integers}$ valued in a topological
space $\SymmSpace$ is a time-homogeneous Markov process if there exists a transition function
	\[ P : \SymmSpace \times \mathscr{B}(\SymmSpace) \to [0,1] : (s,A)\mapsto P_s(A) ,\]
satisfying some measurability conditions (see~\cite{Tweedie} or
\cref{subsection.Markov} in this text), and such that
if, for every $n\in\integers$,
and for every Borel measurable set $A\in\mathscr{B}(\SymmSpace)$, we have almost-surely
	\[ \mathbb{P}\{ S_{n+1}\in A \big| S_1,\dots,S_n \} = P_{S_n}(A). \]
The iterated kernels $P^k$ are then defined to satisfy (\cite{Tweedie} or
\cref{subsection.Markov} in this text)
	\[ P^k_{S_n}(A_1\times\dots\times A_k)
		= \mathbb{P}\{ S_{n+1}\in A_1,\dots, S_{n+k}\in A_k \big| S_1,\dots,S_n \},\]
almost-surely
for all $n,k\in\integers$ and all Borel measurable sets $A_1,\dots,A_k\in\mathscr{B}(\SymmSpace)$.
In contrast with processes made up of independent and identically distributed 
variables, successive Steiner symmetrizations that form a Markov process are mutually correlated. In the deterministic case, such a dependence can be an obstruction to
convergence~\citelist{\cite{Klain}\cite{BurchardFortier}\cite{Weth}}.
We obtain the following result for Steiner symmetrizations:
\begin{theorem}\label{mainTheoremSS}
Let $(S_n)_{n\in\integers}$ be a time-homogeneous Markov process
of Steiner symmetrizations with initial distribution $\mu$. If
there exists $\limitPoint\in\Proj$ such that
	\begin{enumerate}[label=(\roman*)]
		\item\label{mainTheoremSS:Recurrence} (\textit{Recurrence}) for every nonempty open set
		$\limitPoint\in\mathcal{O}\subseteq\Proj$, we have
			\[ \mathbb{P}( (S_n)_{n\in\integers}\ \textrm{enters}\ \mathcal{O}\ \textrm{infinitely many often} )
			= 1 ,\]
		\item\label{mainTheoremSS:Continuity} (\textit{Continuity}) for every $n\in\integers$
		and for every open set $\mathcal{O}\subseteq(\Proj)^n$, the map
			\[ s\in\Proj \mapsto P^n_{s}(\mathcal{O}) \]
		is lower semi-continuous at $\limitPoint$,
		\item\label{mainTheoremSS:Discrimination} (\textit{Discrimination}) for every
		Lebesgue measurable set $X\subseteq\ambSp$ of finite measure,
		with $m(X\Delta X^\star)>0$, the process of Steiner symmetrizations starting at $\limitPoint$ reaches in finite time
		the set $\{u\in\Proj:m(X\Delta X^u)>0\}$, that is:
		\[ \sum_{n\in\integers}P_{\limitPoint}^n(\,(\Proj)^{n-1}\times
			\{u\in\Proj: m(X\Delta X^u) > 0 \}\,)
	> 0 ;\]
	\end{enumerate}
then for every Lebesgue measurable set $X\subseteq\ambSp$ of finite measure, the
sequence $(X^{S_1\dots S_n})_{n\in\integers}$ converges in measure to $X^\star$,
almost-surely.
\end{theorem}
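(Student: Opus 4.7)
The plan is to exploit a non-increasing \emph{asymmetry} functional $\inertie$, the moment of inertia $\inertie(Y) := \int_{Y} |x|^2 \dif x$ being a natural choice. Such a functional is non-increasing under every Steiner symmetrization, strictly decreased by the $u$-symmetrization whenever $m(Y \Delta Y^u) > 0$, continuous for convergence in measure on sets contained in a fixed ball, and satisfies $\inertie(Y) \ge \inertie(X^\star)$ with equality if and only if $Y = X^\star$ modulo a null set, whenever $m(Y) = m(X)$. Setting $Y_n := X^{S_1 \cdots S_n}$ and observing that all $Y_n$ remain inside a fixed $B(0, R)$ (since Steiner symmetrization preserves inclusion in origin-centered balls), one obtains that $(\inertie(Y_n))_{n \in \integers}$ is almost-surely non-increasing, hence converges almost-surely to some limit $L \ge \inertie(X^\star)$. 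It suffices to rule out $L > \inertie(X^\star)$.

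I argue by contradiction: suppose $\mathbb{P}(L > \inertie(X^\star)) > 0$. The recurrence hypothesis \ref{mainTheoremSS:Recurrence} yields a random subsequence $(n_k)$ with $S_{n_k} \to \limitPoint$; weak-$L^2$ pre-compactness of indicators of subsets of $B(0, R)$ allows one to refine this subsequence so that $Y_{n_k} \to Y_\infty$ in measure, with $\inertie(Y_\infty) = L$. In particular, on the contradiction event $m(Y_\infty \Delta X^\star) > 0$, so the discrimination hypothesis \ref{mainTheoremSS:Discrimination} applied to $Y_\infty$ produces $N \in \integers$ and a Borel set $G \subseteq (\Proj)^N$ with $P^N_{\limitPoint}(G) > 0$ on which $\inertie(Y_\infty^{u_1 \cdots u_N}) < \inertie(Y_\infty)$. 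Joint continuity of Steiner symmetrization in the direction and the set, together with continuity of $\inertie$, then allows one to thicken $G$ into an open set $\mathcal{O} \subseteq (\Proj)^N$ and find $\delta > 0$ such that
\[
    \inertie(Z^{u_1 \cdots u_N}) \le \inertie(Z) - \delta
\]
uniformly for $(u_1, \dots, u_N) \in \mathcal{O}$ and for $Z$ in a small measure-neighborhood of $Y_\infty$.

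The Markov property says that, conditionally on the past up to time $n_k$, the block $(S_{n_k + 1}, \dots, S_{n_k + N})$ has law $P^N_{S_{n_k}}$. The lower semi-continuity hypothesis \ref{mainTheoremSS:Continuity} then gives
\[
    \liminf_{k \to \infty} P^N_{S_{n_k}}(\mathcal{O}) \ge P^N_{\limitPoint}(\mathcal{O}) > 0 ,
\]
so a conditional Borel--Cantelli argument yields that, almost-surely on the contradiction event, $(S_{n_k + 1}, \dots, S_{n_k + N}) \in \mathcal{O}$ for infinitely many $k$. For all large $k$, $Y_{n_k}$ lies in the prescribed neighborhood of $Y_\infty$, so each such occurrence gives $\inertie(Y_{n_k + N}) \le \inertie(Y_{n_k}) - \delta/2$; infinitely many such decrements contradict $\inertie(Y_n) \downarrow L$.

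The main obstacle is the second paragraph: extracting in a measurable way a single random subsequence along which both $S_{n_k} \to \limitPoint$ and $Y_{n_k}$ converges in measure, and then promoting the pointwise strict decrease at $Y_\infty$ into a uniform decrement on an open set of Markov-reachable directions. Both steps rest on joint continuity of Steiner symmetrization in the direction and the set on bounded measurable sets, a standard but nontrivial analytic fact that underlies the whole argument; in particular, if the proof is to be uniform across settings (Steiner, polarization, cap symmetrization), this continuity must be packaged abstractly enough to apply in each case.
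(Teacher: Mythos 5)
Your overall strategy is in the right spirit (monotone asymmetry, recurrence plus continuity plus discrimination, derive a contradiction from infinitely many quantified decrements), but there are genuine gaps. First, the functional $\inertie(Y)=\int_Y|x|^2\dif x$ together with the claim that all iterates remain in a fixed ball $B(0,R)$ only works when $X$ itself is bounded; the theorem is stated for arbitrary Lebesgue measurable $X$ of finite measure, and the paper therefore uses the bounded integrand $\inertie(X)=\int_X|x|^2/(1+|x|^2)\dif x$ (\cref{propertiesOfSymm}). Second, and more seriously, weak-$L^2$ precompactness of indicators does not give convergence in measure: a weak limit of $\chi_{Y_{n_k}}$ can be a density strictly between $0$ and $1$. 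What is true and what your argument actually needs is that the orbit $\{X^{s_1\cdots s_n}\}$ under iterated Steiner symmetrizations is \emph{strongly} precompact in $L^1$; this is Kolmogorov--Riesz and is nontrivial (the paper records it as \cref{propertiesOfSymm}). Without it there is no $Y_\infty$ to which to apply the discrimination hypothesis.

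Third, the Borel--Cantelli step conceals two more problems. The times $n_k$ along which you extract $Y_{n_k}\to Y_\infty$ are not in general stopping times, since the refinement uses information about the entire orbit, so the conditional law of $(S_{n_k+1},\dots,S_{n_k+N})$ given $\mathcal{F}_{n_k}$ is not straightforwardly $P^N_{S_{n_k}}$. And even if one uses stopping times for the recurrence only, the orbit may have several accumulation points, so the $\delta$-decrement neighborhood of a single $Y_\infty$ will not contain $Y_{n_k}$ for all large $k$. The paper sidesteps both issues: it defines the stopping times $N_n$ purely from the recurrence (so they are genuine stopping times with gaps $N_{n+1}-N_n\ge n$), and instead of extracting a limit point of the orbit it takes an \emph{infimum over the whole compact set} $\{Y\in\overline{X^\SymmSpace}:\inertie(Y)\geq\inertie(X^\star)+\varepsilon\}$ of the conditional decrement. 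Continuity and discrimination then give that this infimum is summably bounded below, and the Burchard--Fortier summability inequality (\cref{theorem.BF}) converts the divergent sum of conditional decrements into almost-sure convergence of $\inertie(X^{S_1\cdots S_n})$ to $\inertie(X^\star)$. That infimum is the device that renders your ``uniform decrement on a measure-neighborhood of $Y_\infty$'' precise and measurable; without something equivalent the contradiction argument does not close.
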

The discrimination condition~\ref{mainTheoremSS:Discrimination} in \cref{mainTheoremSS}
is similar to condition~(\ref{mainTheoremAlt:Equation}) in \cref{mainTheoremAlt},
and they are equivalent for independent sequences of random symmetrizations.
A necessary condition
for the conclusion of \cref{mainTheoremSS} to hold is that for each Lebesgue measurable
set $X$ of finite measure and with $m(X\Delta X^\star)>0$, we should have
	\begin{equation}\label{intro.Eqn1}
		\sum_{n\in\integers}\mathbb{P}(\{ m(X\Delta X^{S_n})>0 \} )> 0 .
	\end{equation}
By Fubini's theorem, for each $X$, condition~(\ref{intro.Eqn1}) implies that
condition~\ref{mainTheoremSS:Discrimination} holds for some $s\in\Proj$,
but $s$ may depends on $X$ and $n$. Therefore, condition~(\ref{intro.Eqn1}) is close,
but not equivalent, to condition~\ref{mainTheoremSS:Discrimination} in \cref{mainTheoremSS}.
The initial distribution $\mu$ of the process is only involved in the recurrence
condition~\ref{mainTheoremSS:Recurrence}: the recurrent point $\limitPoint$
is assumed to be deterministic, so that its
existence does not simply follows from the compactness of the projective plane $\Proj$.
The continuity condition in \cref{mainTheoremSS}
is stronger than the usual weak-Feller continuity
at $\limitPoint$, but still weaker than the usual strong Feller-continuity everywhere
(see also~\cite{Tweedie} for definitions, and \cref{proposition.strongFeller} and
discussion below for details). The recurrence condition~\ref{mainTheoremSS:Recurrence} and
continuity condition~\ref{mainTheoremSS:Continuity}
ensure together that the asymptotic behaviour of the
process is closed to an independent process with distribution $P_\limitPoint$.

In the proof of \cref{mainTheoremAlt,mainTheoremSS},
we do not study directly the distance between
sets, in order to prove the convergence.
We rather measure the convergence with an asymmetry, which is a
functional of the form
	\begin{equation}\label{eqn.asymmetryFunction}
		\inertie(X) = \int\limits_X\frac{|x|^2}{1+|x|^2}\dif x .
	\end{equation}
The asymmetry function strictly decreases along Steiner symmetrizations of $X$,
and reaches a minimum at $X^\star$. The idea of using such a function to measure the asymmetry of
sets is a standard technique in the field of symmetrizations (see for example~\citelist{\cite{Shape}\cite{BurchardFortier}\cite{vanSchaftExplicit}\cite{Volcic}}).

\subsection{Other symmetrizations}

The spherical nonincreasing rearrangement has been approximated by other
partial symmetrizations, such as cap symmetrizations~\cite{Sarvas}
and polarizations~\citelist{\cite{BrockSolynin}\cite{Baernstein}\cite{vanSchaftUnivApprox}\cite{vanSchaftApproxSymm}\cite{Weth}}. Other symmetrizations such as
the cap symmetrization~\cite{Smets} and discrete symmetrizations~\cite{Pruss} have also been approximated in the deterministic case in order to
prove isoperimetric theorems.

\Cref{mainTheoremSS} \emph{fails} for the approximation of
the spherical nonincreasing rearrangement by successive cap symmetrizations or
polarizations. Polarizations and cap symmetrizations can be thought of as elements
in $\sphere\times [0,+\infty)$ (see also section~\ref{section.examples} for accurate definitions).
In fact, any cap symmetrization is characterized by a affine half line passing through
the origin $0\in\ambSp$, which yields $\sphere\times [0,+\infty)$ as parametrization
space. Although cap symmetrizations from
$\sphere\times(0,+\infty)$ --~those cap symmetrizations whose half line has edge
different of $0$~--
strictly decrease the asymmetry~\eqref{eqn.asymmetryFunction} of non spherical sets, cap symmetrizations from $\sphere\times\{0\}$
may act as isometries. Such symmetrization could conspire to a non convergent behaviour.
On the other hand, they could also be necessary in the realization of the process (see section~\ref{section.newSymmSpace} for more details).

In order to prevent bad behaviour without just throwing away cap symmetrizations from $\sphere\times\{0\}$, we strengthen
the discrimination condition~\ref{mainTheoremSS:Discrimination} from \cref{mainTheoremSS}. This leads us to the following theorem:
\begin{theorem}\label{mainTheoremCap}
Let $(S_n)_{n\in\integers}$ be a time-homogeneous Markov process
of cap symmetrizations (resp.~polarizations) with initial distribution $\mu$. If
there exists $\limitPoint\in\sphere\times [0,+\infty)$ such that
	\begin{enumerate}[label=(\roman*)]
		\item\label{mainTheoremCap:Recurrence} (\textit{Recurrence}) for every nonempty open set
		$\limitPoint\in\mathcal{O}\subseteq\sphere\times[0,+\infty)$, we have
			\[ \mathbb{P}( (S_n)_{n\in\integers}\ \textrm{enters}\ \mathcal{O}\ \textrm{infinitely many often} )
			= 1 ,\]
		\item\label{mainTheoremCap:Continuity} (\textit{Continuity}) for every $n\in\integers$
		and for every open set $\mathcal{O}\subseteq(\sphere\times[0,+\infty))^n$, the map
			\[ s\in\Proj \mapsto P^n_{\limitPoint}(\mathcal{O}) \]
		is lower semi-continuous at $\limitPoint$,
		\item\label{mainTheoremCap:Discrimination} (\textit{Discrimination})
		for every Lebesgue measurable set $X\subseteq\ambSp$ of finite measure, we have
		\[ \sum_{n\in\integers}P^n_{\limitPoint}(
		 	(\,\sphere\times(0,+\infty)\,)^{n-1}\times\{u\in\sphere\times[0,+\infty):m(X\Delta X^u)> 0\}) > 0 ,
		\]
	\end{enumerate}
then for every Lebesgue measurable set $X\subseteq\ambSp$ of finite measure, the
sequence $(X^{S_1\dots S_n})_{n\in\integers}$ converges in measure to $X^\star$,
almost-surely.
\end{theorem}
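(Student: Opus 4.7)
The plan is to transpose the architecture developed for \cref{mainTheoremSS} to the cap symmetrization (resp.~polarization) setting, with the ``bad'' slice $\sphere\times\{0\}$ absorbed by the strengthening in~\ref{mainTheoremCap:Discrimination}. The central monotonicity tool remains the asymmetry $\inertie$ from~\eqref{eqn.asymmetryFunction}.

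First I would establish that $\inertie$ is non-increasing under every cap symmetrization or polarization with parameter $u\in\sphere\times[0,+\infty)$ and, moreover, that for $u\in\sphere\times(0,+\infty)$ and non-spherical $X$ one has $\inertie(X^u)<\inertie(X)$ with a gap that is lower semi-continuous in $(X,u)$ in the measure topology. The monotonicity then implies that the random sequence $(\inertie(X^{S_1\dots S_n}))_{n\in\integers}$ is non-increasing and hence converges almost-surely to some random limit $\alpha\ge\inertie(X^\star)$. A compactness argument on characteristic functions, using the uniform bound on the measure of $X^{S_1\dots S_n}$ together with the decay of the integrand of $\inertie$ at infinity, produces along a random subsequence $(n_k)_{k\in\integers}$ a measurable set $Y$ such that $X^{S_1\dots S_{n_k}}\to Y$ in measure and $\inertie(Y)=\alpha$. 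It therefore suffices to show that $m(Y\Delta Y^\star)=0$ almost-surely.

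Arguing by contradiction, suppose there is a positive-probability event on which $m(Y\Delta Y^\star)>0$. On that event, \ref{mainTheoremCap:Recurrence} ensures that the chain enters every neighborhood of $\limitPoint$ infinitely often. The continuity condition~\ref{mainTheoremCap:Continuity} then lets us bound, at each such entry time $m$, the conditional law of the next block $(S_{m+1},\dots,S_{m+n})$ from below by the reference measure $P^n_{\limitPoint}$ up to an arbitrarily small error. Combined with~\ref{mainTheoremCap:Discrimination} applied to the random limit set $Y$ and with a conditional Borel-Cantelli argument, this shows that infinitely many blocks fall, almost-surely on our event, into the good region $(\sphere\times(0,+\infty))^{n-1}\times\{u:m(Y\Delta Y^u)>0\}$. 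The quantitative strict-decrease property, iterated along such a block and stabilized under the measure convergence $X^{S_1\dots S_{n_k}}\to Y$, would then yield a uniform drop of $\inertie$ incompatible with the convergence of $\inertie(X^{S_1\dots S_n})$ to $\alpha$.

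The main obstacle will be this last transfer: the discrimination hypothesis is formulated deterministically at the fixed point $\limitPoint$ for an a~priori fixed set, whereas we need to invoke it for the \emph{random} limit set $Y$ along blocks starting at arbitrarily late times. The restriction of the intermediate parameters to $\sphere\times(0,+\infty)$ in~\ref{mainTheoremCap:Discrimination} is essential here, as only along such paths does the strict-decrease gap propagate through the $n-1$ intermediate symmetrizations without being wiped out by the isometries of the $\sphere\times\{0\}$ slice. Making this rigorous will require a lower semi-continuity statement for the multivalued map $X\mapsto\{u:m(X\Delta X^u)>0\}$ in the measure topology, together with uniform control of the asymmetry gap along compositions of symmetrizations, and this is where I expect the bulk of the technical work to lie.
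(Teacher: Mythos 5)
Your proposal takes a genuinely different route from the paper, and the obstacle you flag at the end is the right one to worry about --- but I think you are underestimating how much it destabilizes the whole architecture, and how the paper's organization is specifically designed to avoid it.

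The paper's proof of \cref{mainTheoremCap} is essentially a one-liner: check that $\inertie$ is a strict asymmetry on $\CapSymm\setminus\boundary{\CapSymm}$ (resp.~$\PolarSymm\setminus\boundary{\PolarSymm}$) via \cref{propertiesOfSymm}, then apply \cref{proposition.markov}. The real content is in \cref{proposition.markov}, which in turn relies on the Burchard--Fortier summability criterion \cref{theorem.BF}. That criterion requires showing that a sum of \emph{conditional expected drops}
\[
\inf\Big\{ \mathbb{E}\big\{\inertie(Y)-\inertie(Y^{S_{N_n+1}\dots S_{N_{n+1}}})\,\big|\,\mathcal{F}_{N_n}\big\} : Y\in\overline{X^\SymmSpace},\ \inertie(Y)\geq\inertie(X^\star)+\varepsilon\Big\}
\]
diverges, where the infimum is taken over the \emph{fixed, deterministic} compact set $\controlSet$. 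Because the set over which you take the infimum does not depend on $\omega$ and contains every iterate $X^{S_1\dots S_m}$ with asymmetry $\geq\inertie(X^\star)+\varepsilon$, the discrimination hypothesis only ever has to be applied to deterministic $Y$. The $Y$ the paper eventually extracts (in the proof of \cref{proposition.markov}) is the accumulation point of near-minimizers of the infimum, \emph{not} a limit of iterates, so no measurability or adaptedness issues arise. The divergence then forces the indicators $\chi_{\Theta_n^\varepsilon}$ to vanish eventually a.s., and \cref{proposition.ConvergenceCapture} closes the argument. No Borel--Cantelli input is needed.

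Your plan instead extracts a random limit set $Y$ along a random subsequence and then tries to invoke \ref{mainTheoremCap:Discrimination} for that $Y$, feeding the output into a conditional Borel--Cantelli argument. The difficulty is not just that a lower semicontinuity statement for $X\mapsto\{u:m(X\Delta X^u)>0\}$ is needed (that part is manageable). The real problem is that $Y$ is $\mathcal{F}$-measurable but not $\mathcal{F}_m$-measurable for any finite $m$: it depends on the whole future of the process. When you condition on $\mathcal{F}_m$ at an entry time near $\limitPoint$, you cannot treat $Y$ as fixed, and the probability bound provided by \ref{mainTheoremCap:Discrimination} (which concerns a fixed, a~priori given set) does not translate into a lower bound on a conditional probability adapted to the chain. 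Making your argument rigorous would require replacing $Y$ by a fixed $\varepsilon$-dense family in the compact closure of iterates and arguing uniformly --- at which point you are essentially reconstructing the role played by $\controlSet$ in \cref{theorem.BF}. So the route you sketch can in principle be repaired, but the repair is precisely the Burchard--Fortier device, and without it there is a genuine gap at the conditional Borel--Cantelli step.

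One smaller point: you ask for a gap that is ``lower semi-continuous in $(X,u)$'' together with strict decrease for $u\in\sphere\times(0,+\infty)$. What the paper actually uses is weaker and cleaner --- $\inertie$ is a \emph{strict asymmetry} off $\boundary{\CapSymm}$, meaning $\inertie(X^u)=\inertie(X)$ forces $X^u=X$; combined with compactness of $\controlSet$ and continuity of $\inertie$ and of $(X,u)\mapsto X^u$, this already yields the uniform drop via a standard compactness argument. You do not need a separate quantitative gap estimate.
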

While conditions~\ref{mainTheoremCap:Recurrence} and~\ref{mainTheoremCap:Continuity} in \cref{mainTheoremSS} and \cref{mainTheoremCap}
are similar, the discrimination
condition~\ref{mainTheoremCap:Discrimination} takes into account the
bad symmetrizations from $\sphere\times\{0\}$. Condition~\ref{mainTheoremCap:Discrimination} means
that the process starting at $\limitPoint$ reaches in finite time, any set of the form $\{u\in\Proj:m(X\Delta X^u)>0\}$
\emph{without passing through} $\sphere\times\{0\}$.
In the case of independent and identically distributed
cap symmetrizations and polarizations, condition~(\ref{mainTheoremAlt:Equation}) tells us that the boundary set $\sphere\times\{0\}$
is not allowed to support the measure $\mu$; thus the discrimination condition~\ref{mainTheoremCap:Discrimination}
in \cref{mainTheoremCap} reduces to~(\ref{mainTheoremAlt:Equation}).

\subsection{Organization of the paper}

In order to emphasize the main properties of symmetrizations that we use,
we work in \cref{section.settings} with an abstract notion of symmetrizations that covers
Steiner and cap symmetrizations, and polarizations. We draw the reader attention to the fact that the
abstract framework we work with, is mainly aimed to strip the proofs of non pertinent particularities.
Without any assumptions on the process $(S_n)_{n\in\integers}$,
not even the Markov property,
we prove an abstract criterion to test the convergence, \cref{theorem.BF}
(\cref{subsection.abstractSymmProof}). The strategy here is a summability trick used by
Burchard and Fortier~\cite{BurchardFortier}.
This abstract criterion is then particularized in two directions.
We first deduce an abstract version of 
\ref{mainTheoremAlt}. A second particularization is a general result about Markov
processes, from which we derive \cref{mainTheoremSS,mainTheoremCap}.
We finally give several more explicit examples and results.
A particular attention is drawn to a \emph{new example}
for cap symmetrizations, where the boundary $\sphere\times\{0\}$
is needed for the universal convergence and, thus, can not be simply
removed from the parameter space for the cap symmetrizations and polarizations.

\section{Abstract convergence result}\label{section.settings}

\subsection{Abstract symmetrizations}

We fix a metric space $(\admissible,d)$ and a nonexpansive
projector $\star$ in $(\admissible,d)$, that is
a map $[X\in\admissible\mapsto X^\star\in\admissible]$ such that, for all $X,Y\in\admissible$, we have
$d(X^\star,Y^\star)\leq d(X,Y)$ and $X^{\star\star}=X^\star$.
We introduce the following abstract setting for symme\-tri\-za\-tions.
\begin{definition}
A \term{symmetrization space} is a nonempty set $\SymmSpace$ of maps
	$[X\in\admissible \mapsto X^s\in\admissible]$
endowed with a metrizable topology with countable basis, such that
	\begin{enumerate}[label=(\alph*)]
		\item\label{SymmSpace:continuity} (\textit{Continuity}) the map
			$[(X,s)\in\admissible\times\SymmSpace \mapsto X^s]$
		is continuous,
		\item (\textit{Idempotence}) for every $X\in\admissible$, $X^{ss} = X^s$,
		\item\label{SymmSpace:nonexpansiveness} (\textit{Nonexpansiveness})
			for all $X,Y\in\admissible$, $d(X^s,Y^s)\leq d(X,Y)$.
	\end{enumerate}
The elements of $\admissible$ are called \term{objects}, and
elements of $\SymmSpace$ are called \term{symmetrizations}.
\end{definition}
In view of the nonexpansiveness \ref{SymmSpace:nonexpansiveness},
the continuity \ref{SymmSpace:continuity} can be deduced from the
apparently weaker assumption that, for every $X\in\admissible$, the map
$[s\in\SymmSpace \mapsto X^s]$
is continuous.
\begin{definition}
We say that a symmetrization space $\SymmSpace$ is \term{$\star$-compatible} if
	\begin{enumerate}[label=(\alph*)]
		\item for every $s\in\SymmSpace$, for every $X\in\admissible$,
		we have $X^{s\star}=X^\star=X^{\star s}$,
		\item for every $X\in\admissible$, if $X=X^s$ for every $s\in\SymmSpace$,
			then $X=X^\star$.
	\end{enumerate}
\end{definition}
\begin{definition}
Let $\SymmSpace$ be a symmetrization space.
A function $\inertie:\admissible\to\reals$ is an \term{asymmetry} on $\SymmSpace$
if $\inertie$ is continuous and if for every $s\in\SymmSpace$,
for every $X\in\admissible$, we have $\inertie(X^s) \leq \inertie(X)$.
An asymmetry function $\inertie$ is said to be a \term{strict asymmetry}
on $\SymmSpace$
when for every $X\in\admissible$, for every $s\in\SymmSpace$, the equality $\inertie(X^s)=\inertie(X)$ implies $X^s=X$.
\end{definition}
\begin{definition}
Let $\SymmSpace$ be a symmetrization space
and $\inertie$ be an asymmetry function on $\SymmSpace$.
We say that $\inertie$ is \term{$\star$-compatible} if for every $X\in\admissible$ satisfying $\inertie(X)\leq\inertie(X^\star)$, we have $X=X^\star$.
\end{definition}
If a symmetrization space $\SymmSpace$ on $(\admissible,d)$ is $\star$-compatible,
then the function
	\[ \inertie : \admissible \to \reals : \inertie(X) = d(X,X^\star) \]
always defines a $\star$-compatible asymmetry function on $\SymmSpace$.
However, this asymmetry function might not be the best choice in convergence theory.
Another direct consequence of the definitions is that every strict asymmetry function
on a $\star$-compatible symmetrization space $\SymmSpace$, is itself $\star$-compatible.

The next proposition characterizes the convergence in $\admissible$
of iterated symmetrizations in terms of the asymmetry.
\begin{proposition}\label{proposition.ConvergenceCapture}
Let $\SymmSpace$ be a symmetrization space,
$\inertie$ be an asymmetry function on $\SymmSpace$, $(s_n)_{n\in\integers}$ be a sequence in $\SymmSpace$ and $X\in\admissible$.
If $\SymmSpace$ and $\inertie$ are $\star$-compatible,
then
	\[ \liminf_{n\to+\infty} d( X^\star , X^{s_1\dots s_n} ) = 0 \]
if and only if, the set $\{X^{s_1\dots s_n}:n\in\integers\}$ has compact closure
in $\admissible$ and
	\[ \liminf_{n\to+\infty} \inertie( X^{s_1\dots s_n} ) = \inertie(X^\star) . \]
\end{proposition}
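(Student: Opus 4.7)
The plan is to lean on three structural facts: first, the projector $\star$, being nonexpansive by hypothesis on $(\admissible, d)$, is continuous; second, the $\star$-compatibility of $\SymmSpace$ gives $X^{\star s} = X^\star$ for every $s\in\SymmSpace$, so that nonexpansiveness of each $s\in\SymmSpace$ yields $d(X^{s_1\dots s_n}, X^\star) \leq d(X^{s_1\dots s_{n-1}}, X^\star)$ and the sequence $n\mapsto d(X^{s_1\dots s_n}, X^\star)$ is monotonically nonincreasing; third, the $\star$-compatibility of $\inertie$ is exactly what will let us recognize $X^\star$ from a single value of the asymmetry.

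The forward implication then follows almost immediately from the monotonicity just observed: it upgrades $\liminf_{n} d(X^\star, X^{s_1\dots s_n}) = 0$ into an honest limit $\lim_{n} d(X^\star, X^{s_1\dots s_n}) = 0$. The full sequence therefore converges in $(\admissible, d)$, so its range together with the limit $X^\star$ is compact, and continuity of $\inertie$ transfers this convergence into $\lim_{n} \inertie(X^{s_1\dots s_n}) = \inertie(X^\star)$.

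For the converse, I would first choose a subsequence $(n_k)$ along which $\inertie(X^{s_1\dots s_{n_k}}) \to \inertie(X^\star)$, and then use the assumed compactness of $\overline{\{X^{s_1\dots s_n} : n\in\integers\}}$ to pass to a further subsequence converging to some $Y\in\admissible$. Continuity of $\inertie$ gives $\inertie(Y) = \inertie(X^\star)$. A short induction based on the identity $X^{s\star} = X^\star$ from the $\star$-compatibility of $\SymmSpace$ shows $X^{s_1\dots s_n \star} = X^\star$ for every $n$, and the continuity of $\star$ lets us pass to the limit along the subsequence to conclude $Y^\star = X^\star$. Hence $\inertie(Y) = \inertie(Y^\star)$, so the $\star$-compatibility of $\inertie$ forces $Y = Y^\star = X^\star$, and therefore $\liminf_{n} d(X^\star, X^{s_1\dots s_n}) = 0$. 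The only slightly delicate point is noticing that the pointwise compatibility identities survive in subsequential limits, which is valid precisely because both the symmetrizations and the projector $\star$ are continuous; the remainder is a routine diagonal-style extraction.
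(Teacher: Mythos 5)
Your proof is correct and follows essentially the same route as the paper: forward direction via monotonicity of $d(X^{s_1\dots s_n}, X^\star)$ (from $X^{\star s}=X^\star$ and nonexpansiveness) plus compactness of the closure of a convergent sequence and continuity of $\inertie$; converse via compactness extraction of a limit point $Y$, the identity $Y^\star=X^\star$, and the $\star$-compatibility of $\inertie$ to force $Y=X^\star$. The only cosmetic difference is that the paper upgrades the converse to full convergence of the sequence by noting every accumulation point equals $X^\star$, while you stop at $\liminf d=0$, which is all the statement requires.
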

\begin{proof}
The ``only if'' part is a consequence of the continuity of $\inertie$, and the fact that the closure of a convergence sequence
is always compact. For the converse, assume that
the set $\{X^{s_1\dots s_n}:n\in\integers\}$ has compact closure, and that
	\[ \liminf_{n\to+\infty} \inertie( X^{s_1\dots s_n} ) = \inertie(X^\star) .\]
Since the asymmetry function decreases along symmetrizations,
the sequence $(\inertie(X^{s_1\dots s_n}))_{n\in\integers}$
converges to $\inertie(X^\star)$.
By compactness assumption, there exists a subsequence $(X^{s_1\dots s_{n_k}})_{k\in\integers}$ that converges to some $Y\in\admissible$.
By continuity of $\inertie$, we have
	\[ \inertie(X^\star) = \lim_{k\to+\infty} \inertie( X^{s_1\dots s_{n_k}} ) = \inertie(Y) .\]
The $\star$-compatibility of $\SymmSpace$ then implies
	\[ d(X^\star,Y^\star) = \lim_{k\to+\infty}d(X^\star,X^{s_1\dots s_{n_k}\star})
		= d(X^\star,X^\star) = 0, \]
so that $X^\star=Y^\star$ and $\inertie(Y^\star)=\inertie(Y)$.
Since $\inertie$ is $\star$-compatible, we deduce that
$Y=X^\star$. Since this is true for each accumulation point of the sequence $(X^{s_1\dots s_n})_{n\in\integers}$, by compactness, this sequence converges to $X^\star$.
\end{proof}

\subsection{Abstract result for random symmetrizations}\label{subsection.abstractSymmProof}

From now on, we fix a probability space $(\Omega,\mathscr{A},\mathbb{P})$ and
$(S_n)_{n\in\integers}$ a sequence of measurable maps from $(\Omega,\mathscr{A})$ to $\SymmSpace$, which is assumed to be a symmetrization space, endowed with its Borel $\sigma$-algebra
$\mathscr{B}(\SymmSpace)$.
For every $n\in\integers$, we write $\mathcal{F}_n$ the sub-$\sigma$-algebra of $\mathscr{A}$
induced by $\{S_1,\dots,S_n\}$,
and $\mathcal{F}$ the smallest sub-$\sigma$-algebra of $\mathscr{A}$
that contains $\bigcup_{n\in\integers}\mathcal{F}_n$.
If $N$ is a stopping time adapted
to $(S_n)_{n\in\integers}$,
its induced filtration is denoted by $\mathcal{F}_N$.
Throughout the text, we write $\mathbb{P}(\cdot)$ (resp.~$\mathbb{E}$)
probabilities (resp.~expectations), and 
$\mathbb{E}\{\cdot|\cdot\}$ conditional expectations.

The next technical lemma allows
to reduce the randomness by taking the infimum; it follows from
the classical properties of conditional expectation.
\begin{lemma}\label{lemma.infGoesOut}
Let $\SymmSpace$ be a symmetrization space
and $\mathcal{B}\subseteq\mathcal{F}$ be a $\sigma$-algebra.
Let be $X\in\admissible$, and set
	\[ X^\SymmSpace = \{ X^{s_1\dots s_n}:n\in\integers, s_1,\dots,s_n\in\SymmSpace \} .\]
If $f: X^\SymmSpace \times \SymmSpace \to \reals$ is continuous
and bounded, and if $\mathfrak{G}:(\Omega,\mathcal{B})\to X^\SymmSpace$ and
$S:(\Omega,\mathcal{F})\to\SymmSpace$ are measurable,
then we have for every $U\in\admissible$, almost-surely on $\mathfrak{G}^{-1}(U)$,
	\[ \mathbb{E}\{ f(\mathfrak{G},S) \big| \mathcal{B} \}
		\geq \inf_{Y\in U}\mathbb{E}\{ f(Y,S) \big| \mathcal{B} \} .\]
\end{lemma}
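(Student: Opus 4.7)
The plan is to establish the almost-sure pointwise identity
\[
\mathbb{E}\{f(\mathfrak{G},S)\big|\mathcal{B}\}(\omega) \;=\; g_{\mathfrak{G}(\omega)}(\omega), \qquad \text{where} \quad g_Y(\omega) := \mathbb{E}\{f(Y,S)\big|\mathcal{B}\}(\omega),
\]
from which the lemma follows at once: on $\mathfrak{G}^{-1}(U)$, by definition $\mathfrak{G}(\omega)\in U$, and therefore $g_{\mathfrak{G}(\omega)}(\omega)\geq \inf_{Y\in U}g_Y(\omega)$ pointwise. This identity is the classical substitution principle, expressing that a $\mathcal{B}$-measurable random variable behaves as a deterministic parameter inside a $\mathcal{B}$-conditional expectation.

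I would verify the identity first for product functions $f(y,s)=\phi(y)\psi(s)$ with $\phi,\psi$ bounded continuous. Since $\phi(\mathfrak{G})$ is $\mathcal{B}$-measurable and bounded, the pull-out rule for conditional expectation gives
\[
\mathbb{E}\{\phi(\mathfrak{G})\psi(S)\big|\mathcal{B}\}(\omega)
\;=\; \phi(\mathfrak{G}(\omega))\cdot\mathbb{E}\{\psi(S)\big|\mathcal{B}\}(\omega)
\;=\; g_{\mathfrak{G}(\omega)}(\omega)
\]
almost surely, and by linearity the identity extends to finite sums of products.

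The delicate step is the extension to a general bounded continuous $f$. Because $\SymmSpace$ carries a countable basis and $(X,s)\mapsto X^s$ is continuous by the symmetrization space axioms, the orbit $X^\SymmSpace$ is separable, and so a countable family of finite products of bounded continuous functions suffices to approximate $f$ uniformly on compact subsets of $X^\SymmSpace\times\SymmSpace$. The uniform boundedness of $f$ then lets one pass to the limit in the conditional expectations by dominated convergence. The main obstacle is of a measurable-selection nature: conditional expectations are only defined up to null sets that a priori depend on the parameter $Y$, so one must work along a fixed countable approximating scheme in order to collapse all exceptional sets into a single universal null set. The continuity of $f$ in $y$ finally ensures that the identity, initially obtained on a countable dense subset of $X^\SymmSpace$, propagates by continuity to all of $X^\SymmSpace$, and in particular to the random value $Y=\mathfrak{G}(\omega)$.
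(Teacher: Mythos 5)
Your strategy differs from the paper's, and the ``delicate step'' you flag is precisely where the argument fails to close. The paper never proves the full substitution identity $\mathbb{E}\{f(\mathfrak{G},S)\,|\,\mathcal{B}\}=g_{\mathfrak{G}(\cdot)}(\cdot)$. After reducing to $f\geq 0$, it approximates $f$ from above by a decreasing sequence $(f_n)$ of simple functions whose level sets are finite unions of disjoint Borel rectangles, converging $\mu$-a.e.\ to $f$ for $\mu$ the joint law of $(\mathfrak{G},S)$. For each such $f_n$ the inequality is immediate from the pull-out rule: the conditional expectation reads $\sum_i c_i\chi_{A_i}(\mathfrak{G})\,\mathbb{E}\{\chi_{B_i}(S)\,|\,\mathcal{B}\}$, so only finitely many exceptional null sets appear, and $Y$ enters only through the deterministic indicators $\chi_{A_i}(Y)$. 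Taking $\inf_n$ and invoking the monotone convergence theorem for conditional expectations finishes. This route never needs to vary $Y$ continuously.

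Your route does need that, and this is the gap: to propagate the identity from a countable dense subset of $X^\SymmSpace$ to the random value $\mathfrak{G}(\omega)$, you must produce a single version of $(Y,\omega)\mapsto g_Y(\omega)$ that is, for almost every $\omega$, continuous in $Y$. This is not supplied by ``collapsing null sets along a countable approximating scheme'': each $g_Y$ is only defined up to a $Y$-dependent null set, and uncountably many $Y$ are in play. The missing ingredient is a regular conditional distribution $\nu_\omega$ of $S$ given $\mathcal{B}$, which makes $g_Y(\omega)=\int_\SymmSpace f(Y,s)\,\nu_\omega(\dif s)$ continuous in $Y$ by dominated convergence and turns the substitution identity into a disintegration computation. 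This can indeed be done here since $\SymmSpace$ is separable metrizable, but it is an extra, unstated step; once it is in place, the Stone--Weierstrass approximation by products becomes unnecessary. A secondary concern is that ``approximate $f$ uniformly on compacts'' plus ``dominated convergence'' tacitly assumes tightness of $\mu$, which is not automatic on a separable metric space; the paper sidesteps this by constructing the approximants to converge $\mu$-a.e.\ directly.
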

\begin{proof}
Without loss of generality, we can assume that $f$ is positive.
The topological space $X^\SymmSpace\times\SymmSpace$ is second-countable.
Therefore it is not difficult to prove that,
since $f$ is bounded and continuous on $X^\SymmSpace\times\SymmSpace$,
for all probability measure $\mu$ on $\mathscr{B}(X^\SymmSpace\times\SymmSpace)$,
there exists a decreasing sequence $(f_n)_{n\in\mathbf{N}}$ of simple functions,
converging $\mu$-almost-everywhere to $f$, and whose level sets are finite unions
of disjoint Borel rectangles.

We apply this approximation scheme with the conjoint distribution $\mu$ of the random
vector $(\mathfrak{G},S)$. Let $(f_n)_{n\in\mathbf{N}}$ be the corresponding approximation
sequence. According to standard properties of the conditional
expectation~\cite{Billingsley}*{Theorem~34.3}, we have for every $n\in\integers$,
almost-surely on $\mathfrak{G}^{-1}(U)$,
	\[ \mathbb{E}\{ f_n(\mathfrak{G},S) \big| \mathcal{B} \}
		\geq \inf_{Y\in U}\mathbb{E}\{ f_n(Y,S) \big| \mathcal{B} \} .\]
It now follows from the monotone convergence theorem for the
conditional expectation that, almost-surely on $\mathfrak{G}^{-1}(U)$,
	\begin{multline*}
		\mathbb{E}\{ f(\mathfrak{G},S) \big| \mathcal{B} \}
			= \inf_{n\in\integers}\mathbb{E}\{ f_n(\mathfrak{G},S) \big| \mathcal{B} \} 
			\geq \inf_{n\in\integers}\inf_{Y\in U}\mathbb{E}\{ f_n(Y,S) \big| \mathcal{B} \} \\
			\geq \inf_{n\in\integers}\inf_{Y\in U}\mathbb{E}\{ f(Y,S) \big| \mathcal{B} \}
			= \inf_{Y\in U}\mathbb{E}\{ f(Y,S) \big| \mathcal{B} \}.
			\qedhere
	\end{multline*}
\end{proof}
\noindent We are now ready to prove the main result about general stochastic processes of symmetrizations, which need not be Markov
processes.
\begin{proposition}[Convergence by divergence]\label{theorem.BF}
Let $\SymmSpace$ be a symmetrization space
and $\inertie$ be an asymmetry function on $\SymmSpace$, such that $\SymmSpace$ and $\inertie$ are
both $\star$-compatible.
Let be $X\in\admissible$. If the set
	\[ X^\SymmSpace = \{ X^{s_1\dots s_n}:n\in\integers, s_1,\dots,s_n\in\SymmSpace
		\} \]
has compact closure in $(\admissible,d)$, and
if there exists an increasing and almost-surely finite sequence of
stopping times $(N_n)_{n\in\integers}$ adapted to $(S_n)_{n\in\integers}$,
such that we have almost-surely for all $\epsilon>0$
	\begin{equation}
		\sum_{n\in\integers} \inf
		\Big\{ \mathbb{E}\{ 
			\inertie( Y )- \inertie( Y^{s_{N_n+1}\dots s_{N_{(n+1)}}} )
		\big|\mathcal{F}_{N_n} \} \;:
		 Y\in\overline{X^\SymmSpace},\;
		 \inertie(Y)\geq \inertie(X^\star)+\varepsilon \Big\} = +\infty, \label{eqn.BF}
	\end{equation}
then the sequence $(X^{S_1\dots S_n})_{n\in\integers}$ converges 
almost-surely to $X^\star$.
\end{proposition}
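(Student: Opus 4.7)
The plan is to combine the deterministic criterion of \cref{proposition.ConvergenceCapture} with a Burchard--Fortier style telescoping argument, using \cref{lemma.infGoesOut} to force the asymmetry to decay in conditional expectation.

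Since $\inertie$ is non-increasing along each symmetrization, the sequence $(\inertie(X^{S_1\dots S_n}))_{n\in\integers}$ decreases almost surely to some random limit $L\geq\inertie(X^\star)$. Because $\overline{X^\SymmSpace}$ is compact and $\inertie$ is continuous, \cref{proposition.ConvergenceCapture} reduces the conclusion to proving that $L=\inertie(X^\star)$ almost surely. I would argue by contradiction: assume that for some $\varepsilon>0$ the event $A_\varepsilon=\{L\geq\inertie(X^\star)+\varepsilon\}$ has positive probability.

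Next, I would subsample along the stopping times, setting $G_n=X^{S_1\dots S_{N_n}}$, which is $\mathcal{F}_{N_n}$-measurable and takes values in $X^\SymmSpace\subseteq\overline{X^\SymmSpace}$ since $N_n$ is almost surely finite. The non-negative telescoping increments $Z_n=\inertie(G_n)-\inertie(G_{n+1})$ satisfy $\sum_n Z_n\leq\inertie(X)-\inertie(X^\star)<\infty$ almost surely, because $\inertie$ is bounded on the compact set $\overline{X^\SymmSpace}$. Taking expectations and using the tower property,
\[
\mathbb{E}\Big[\sum_n\mathbb{E}\{Z_n\big|\mathcal{F}_{N_n}\}\Big]=\mathbb{E}\Big[\sum_n Z_n\Big]<\infty,
\]
so $\sum_n\mathbb{E}\{Z_n\big|\mathcal{F}_{N_n}\}<\infty$ almost surely.

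On the event $A_\varepsilon$, the monotone limit forces $\inertie(G_n)\geq\inertie(X^\star)+\varepsilon$ for every $n$. Applying \cref{lemma.infGoesOut} with $\mathfrak{G}=G_n$, $\mathcal{B}=\mathcal{F}_{N_n}$, $U=\{Y\in\overline{X^\SymmSpace}:\inertie(Y)\geq\inertie(X^\star)+\varepsilon\}$, and the composition $S_{N_n+1}\dots S_{N_{n+1}}$ playing the role of the free symmetrization, one obtains on $A_\varepsilon$
\[
\mathbb{E}\{Z_n\big|\mathcal{F}_{N_n}\}\geq\inf_{Y\in U}\mathbb{E}\{\inertie(Y)-\inertie(Y^{S_{N_n+1}\dots S_{N_{n+1}}})\big|\mathcal{F}_{N_n}\}.
\]
Summing and invoking the hypothesis~\eqref{eqn.BF} yields divergence of $\sum_n\mathbb{E}\{Z_n\big|\mathcal{F}_{N_n}\}$ on $A_\varepsilon$, contradicting the finiteness obtained above. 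Hence $L=\inertie(X^\star)$ almost surely, and \cref{proposition.ConvergenceCapture} closes the argument. The main technical obstacle is the application of \cref{lemma.infGoesOut} in this random-length regime: the lemma is phrased for a single symmetrization, whereas the composition $S_{N_n+1}\dots S_{N_{n+1}}$ involves a random number of factors determined by two stopping times; I would handle this either by partitioning on the value of $N_{n+1}-N_n$ and iterating the lemma, or by interpreting the composition as a single measurable map into the disjoint union $\bigsqcup_{k\geq 1}\SymmSpace^k$, equipped with a topology making the composition action jointly continuous, so that \cref{lemma.infGoesOut} applies verbatim.
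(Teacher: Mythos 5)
Your proposal is correct and follows essentially the same approach as the paper's proof: the telescoping decomposition of $\inertie(X)-\inertie(X^\star)$ along the stopping times $N_n$, the tower property to pass to conditional expectations, \cref{lemma.infGoesOut} to replace the $\mathcal{F}_{N_n}$-measurable object $X^{S_1\dots S_{N_n}}$ by the infimum over the control set, the hypothesis~\eqref{eqn.BF} to force the indicator of $\{\inertie(X^{S_1\dots S_{N_n}})\geq\inertie(X^\star)+\varepsilon\}$ to vanish eventually, and finally \cref{proposition.ConvergenceCapture} to upgrade asymmetry convergence to convergence in $(\admissible,d)$. The one presentational difference is that the paper argues directly from the a.s.\ bound $\inertie(X)-\inertie(X^\star)\geq\mathbb{E}\bigl(\sum_n\chi_{\Theta_n^\varepsilon}\inf_Y\cdots\bigr)$, while you argue by contradiction on the event $A_\varepsilon$ of positive probability; the mechanism is identical. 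One small point worth being careful about: the event you sit on is $A_\varepsilon$, but \cref{lemma.infGoesOut} delivers its inequality on the larger event $\{G_n\in U\}$, which only contains $A_\varepsilon$; the paper's formulation sidesteps this by weighting directly with the $\mathcal{F}_{N_n}$-measurable indicators $\chi_{\Theta_n^\varepsilon}$, which keeps every step inside the appropriate filtration. Your argument is still valid because $A_\varepsilon\subseteq\Theta_n^\varepsilon$ for all $n$ by monotonicity of $\inertie$, but the paper's bookkeeping is cleaner. Your flagged technical obstacle regarding the random-length composition $S_{N_n+1}\dots S_{N_{n+1}}$ is real; the paper applies \cref{lemma.infGoesOut} without comment, implicitly using it with $\SymmSpace$ replaced by $\bigsqcup_{k\geq 1}\SymmSpace^k$, which is still second-countable and to which the approximation argument in the lemma's proof extends verbatim. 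Either of your two suggested fixes is adequate.
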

The proof is based on a summability trick found by Burchard and Fortier~\cite{BurchardFortier}.
\begin{proof}
If $X=X^\star$, there is nothing to prove. Otherwise,
by $\star$-compatibility of
$\inertie$, there exists $\varepsilon>0$
such that $\inertie(X)\geq \inertie(X^\star)+\varepsilon$.
For $m\in\integers$, we write almost-surely
	\begin{align*}
		\inertie(X)-\inertie(X^\star) &= \inertie(X)-\inertie(X^{S_1\dots S_{N_1}}) + \inertie(X^{S_1\dots S_{N_{(m+1)}}})-\inertie(X^\star) \\
			&\quad + \sum^m_{n=1} \inertie( X^{S_1\dots S_{N_n}} ) - \inertie( X^{S_1\dots S_{N_{(n+1)}}} ) \\
			&\geq \sum^m_{n=1} \inertie( X^{S_1\dots S_{N_n}} ) - \inertie( X^{S_1\dots S_{N_{(n+1)}}} ) \\ 
			&\geq \sum^m_{n=1} \chi_{\Theta_n}\cdot ( \inertie( X^{S_1\dots S_{N_n}} ) - \inertie( X^{S_1\dots S_{N_{(n+1)}}} ) ) .
	\end{align*}
where, for every $n\in\integers$, we have defined the set
	\[ \Theta_n^\varepsilon = \{ \inertie( X^{S_1\dots S_{N_n}} ) \geq \inertie(X^\star)+\varepsilon \} \in \mathcal{F}_{N_n} .\]
(Here the symbol $\chi$ denotes indicator functions of sets.)
Taking the expectation on both sides, we compute
	\begin{align*}
		\inertie(X)-\inertie(X^\star) &\geq \sum^m_{n=1} \mathbb{E}( \chi_{\Theta_n^\varepsilon}\cdot ( \inertie( X^{S_1\dots S_{N_n}} ) - \inertie( X^{S_1\dots S_{N_{(n+1)}}} ) ) ) \\
			&= \sum^m_{n=1} \mathbb{E}( \chi_{\Theta_n^\varepsilon}
				\mathbb{E}\{ \inertie( X^{S_1\dots S_{N_n}} ) - \inertie( X^{S_1\dots S_{N_{(n+1)}}} ) \big| \mathcal{F}_{N_n} \} ) .
	\end{align*}
According to \cref{lemma.infGoesOut}, and writing
$\controlSet = \{ Y\in\overline{X^\SymmSpace}:
		\inertie(Y)\geq\inertie(X^\star)+\varepsilon \}$,
we get
	\begin{align*}
		\inertie(X)-\inertie(X^\star)
			&\geq \sum^m_{n=1} \mathbb{E}( \chi_{\Theta_n^\varepsilon}
			\inf_{Y\in\controlSet}
				\mathbb{E}\{ \inertie(Y) - \inertie( Y^{S_{N_n+1}\dots S_{N_{(n+1)}}}) \big| \mathcal{F}_{N_n} \} ) \\
			&= \mathbb{E}\left( \sum^m_{n=1} \chi_{\Theta_n^\varepsilon}
			\inf_{Y\in\controlSet}
				\mathbb{E}\{ \inertie(Y) - \inertie( Y^{S_{N_n+1}\dots S_{N_{(n+1)}}}) \big| \mathcal{F}_{N_n} \} \right) .
	\end{align*}
Letting $m\in\integers$ tend to $+\infty$, the monotone convergence theorem ensures
	\[ \inertie(X)-\inertie(X^\star) \geq \mathbb{E}\left( \sum_{n\in\integers} \chi_{\Theta_n^\varepsilon} \inf_{Y\in\controlSet}
				\mathbb{E}\{ \inertie(Y) - \inertie( Y^{S_{N_n+1}\dots S_{N_{(n+1)}}}) \big| \mathcal{F}_{N_n} \} \right) . \]
Therefore, we have almost-surely
	\[ \sum_{n\in\integers} \chi_{\Theta_n^\varepsilon} \inf_{Y\in\controlSet}
				\mathbb{E}\{ \inertie(Y) - \inertie( Y^{S_{N_n+1}\dots S_{N_{(n+1)}}} ) \big| \mathcal{F}_{N_n} \} < +\infty .\]
According to identity (\ref{eqn.BF}) and the monotonicity of $\inertie$ along symmetrizations,
the sequence $(\chi_{\Theta_n^\varepsilon})_{n\in\integers}$
reaches $0$ almost-surely after finitely many steps, so that we have almost-surely
	\[ \lim_{n\to\infty}
		\inertie(X^{S_1\dots S_n})\leq\inertie(X^\star)+\varepsilon .\]
Now, considering only rational $\varepsilon>0$, we deduce that
the sequence $(\inertie(X^{S_1\dots S_n}))_{n\in\integers}$ converges almost-surely to
$\inertie(X^\star)$. By \cref{proposition.ConvergenceCapture},
the sequence $(X^{S_1\dots S_n})_{n\in\integers}$ converges almost-surely to $X^\star$.
\end{proof}

\subsection{Convergence of independent processes}\label{subsection.indepProof}

In the context of processes made up from independent and
identically distributed symmetrizations, we obtain an abstract result
from which \cref{mainTheoremAlt} directly follows.

\begin{theorem}\label{theoreme.iid}
Let $\SymmSpace$ be a $\star$-compatible symmetrization space.
Let $(S_n)_{n\in\integers}$ be a sequence of independent and $\mu$-identically distributed
variables. If, for every $X\in\admissible$, the set
	\[ X^\SymmSpace = \{ X^{s_1\dots s_n}:n\in\integers, s_1,\dots,s_n\in\SymmSpace \} \]
has compact closure in $(\admissible,d)$, then
the following conditions are equivalent:
	\begin{enumerate}[label=(\roman*)]
		\item for every $X\in\admissible$, the sequence $(X^{S_1\dots S_n})_{n\in\integers}$
		converges almost-surely to $X^\star$,
		\item for every $X\in\admissible$ with $X\neq X^\star$,
		we have $\mu( \{s\in\SymmSpace: X\neq X^s \} ) > 0$.
	\end{enumerate}
\end{theorem}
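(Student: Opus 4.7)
I would first dispatch the direction (i)$\Rightarrow$(ii) by contrapositive: if some $X_{0}\neq X_{0}^{\star}$ satisfies $\mu(\{s\in\SymmSpace:X_{0}^{s}\neq X_{0}\})=0$, then $X_{0}^{S_{1}}=X_{0}$ almost surely, and a routine induction on $n$ using the independence of $S_{n+1}$ from $\mathcal{F}_{n}$ shows $X_{0}^{S_{1}\dots S_{n}}=X_{0}$ almost surely for every $n$. Hence the sequence stays at $X_{0}\neq X_{0}^{\star}$ and (i) fails.

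For the substantial direction (ii)$\Rightarrow$(i), I fix $X\in\admissible$ and plan to invoke \cref{theorem.BF} with the deterministic stopping times $N_{n}=n$ and the asymmetry $\inertie(Y)=d(Y,X^{\star})$. First I verify the hypotheses: since $X^{\star s}=X^{\star}$ for every $s$ and the projector $\star$ is nonexpansive hence continuous, one has $Y^{\star}=X^{\star}$ for every $Y\in\overline{X^{\SymmSpace}}$, and then nonexpansiveness of $s$ yields $\inertie(Y^{s})=d(Y^{s},X^{\star s})\leq\inertie(Y)$, so $\inertie$ is an asymmetry that is $\star$-compatible on the orbit. Independence and identical distribution collapse the conditional expectation in the divergence condition~\eqref{eqn.BF} to a single deterministic quantity, so for every $\varepsilon>0$ the condition reduces to the claim
\[
    \inf_{Y\in\controlSet}\int_{\SymmSpace}\bigl(\inertie(Y)-\inertie(Y^{s})\bigr)\,d\mu(s)>0,
\]
where $\controlSet=\{Y\in\overline{X^{\SymmSpace}}:\inertie(Y)\geq\inertie(X^{\star})+\varepsilon\}$ is a compact subset of $\overline{X^\SymmSpace}$.

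The functional inside the infimum is continuous in $Y$ by dominated convergence (the integrand is uniformly bounded on the compact orbit), so the infimum is attained, say at $Y_{0}\in\controlSet$. The heart of the proof is to rule out that this infimum equals zero: if it did, then $\inertie(Y_{0}^{s})=\inertie(Y_{0})$ for $\mu$-almost every $s$, while (ii) combined with $Y_{0}\neq X^{\star}$ (forced by $\inertie(Y_{0})>\inertie(X^{\star})$) guarantees $\mu(\{s:Y_{0}^{s}\neq Y_{0}\})>0$. I expect this step to be the main obstacle: reconciling these two conditions requires upgrading the mere non-invariance provided by (ii) into a strict decrease of $\inertie$ on a positive-$\mu$-measure set of symmetrizations, either by using a strict-asymmetry property of $\inertie$ (which, in the concrete settings targeted by the paper, holds for the Burchard--Fortier-type functional~\eqref{eqn.asymmetryFunction}) or by an iteration scheme along the iid chain starting from $Y_{0}$ that uses compactness of $\overline{X^{\SymmSpace}}$ and repeated application of (ii) to force an eventual drop in $\inertie$. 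Once this strict positivity is established, \cref{theorem.BF} immediately delivers the almost-sure convergence $X^{S_{1}\dots S_{n}}\to X^{\star}$ and completes the equivalence.
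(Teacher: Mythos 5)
Your proposal replicates the paper's strategy: for (ii)$\Rightarrow$(i) one invokes \cref{theorem.BF} with the asymmetry $\inertie(Y)=d(Y,Y^\star)$ and the deterministic stopping times $N_n=n$, then uses compactness of $\controlSet$ and continuity of $Y\mapsto\int_\SymmSpace\bigl(\inertie(Y)-\inertie(Y^s)\bigr)\,\mu(\dif s)$ to reduce to the positivity of this integral for each fixed $Y\in\controlSet$; the contrapositive (i)$\Rightarrow$(ii) you give is also the paper's. So the route is the same, and you are right that the reduction works up to the final positivity step.

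That last step is a genuine gap, and neither of the two remedies you sketch closes it as stated. Remedy (a) is not available here, because $\inertie(Y)=d(Y,Y^\star)$ is \emph{not} a strict asymmetry in a general nonexpansive framework: take $\admissible=\reals^2$ with the $\ell^\infty$ metric, $\star\equiv 0$, and $\SymmSpace=\{s_1,s_2\}$ the two coordinate projections with $\mu$ uniform. This is a $\star$-compatible symmetrization space with compact orbits, (ii) holds, and (i) is true (both projections occur eventually almost surely), yet at $Y=(1,1)$ one has $Y^{s_1}=(1,0)\neq Y$ while $\inertie(Y^{s_1})=\inertie(Y)=1$, so that $\int_\SymmSpace(\inertie(Y)-\inertie(Y^s))\,\mu(\dif s)=0$. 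Thus the one-step reduction with $N_n=n$ simply cannot yield~\eqref{eqn.BF}. Remedy (b) is the right direction: choose $N_n=kn$ and argue, using compactness of $\controlSet$ and repeated use of (ii), that the expected $k$-step drop is bounded below on $\controlSet$ for $k$ large enough (in the example above $k=2$ already works); but that argument still has to be supplied, and your sketch stops short of it. Be aware also that the corresponding line in the paper's proof, ``there exists $\delta>0$ such that $\mu(\{s\in\SymmSpace:d(Y,Y^\star)\geq\delta\})>0$,'' states a condition that does not depend on $s$ at all, and the inequality drawn from it does not establish the required positivity either; so the weak joint you identified is in fact a weak joint of the published argument, not only of yours.
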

\begin{proof}
Let us first assume that, for every $X\in\admissible$ with $X\neq X^\star$, we have
	\[ \mu( \{ s\in\SymmSpace :	X\neq X^s \} ) > 0 .\]
We apply \cref{theorem.BF} with the $\star$-compatible asymmetry
	\[ \inertie : \admissible \to \reals : \inertie(X) = d(X,X^\star) .\]
We consider the increasing sequence of deterministic
stopping times $(N_n)_{n\in\integers}=(n)_{n\in\integers}$.
Let be $X\in\admissible$ with $d(X,X^\star)\geq \varepsilon>0$
(otherwise there is nothing to prove)
and set
	\[ \overline{X^\SymmSpace_\varepsilon}
		= \{ Y \in \overline{X^\SymmSpace} : d(Y,X^\star)\geq \varepsilon \} .\]
\Cref{theorem.BF} shows that it is sufficient to prove
	\[ \inf_{Y\in\overline{X^\SymmSpace_\varepsilon}}
		\int\limits_\SymmSpace
			d(Y,Y^\star) - d(Y^s,Y^\star) \mu(\dif s)
	> 0  .\]
By continuity of $\inertie$ and compactness of $\overline{X^\SymmSpace}$, it suffices to prove that for all
$Y\in\overline{X^\SymmSpace_\varepsilon}$, we have
	\[ \int\limits_\SymmSpace
			d(Y,Y^\star) - d(Y^s,Y^\star) \mu(\dif s)
	> 0  .\]
Let us fix $Y\in\overline{X^\SymmSpace_\varepsilon}$.
By assumption, there exists some $\delta>0$ such that
	\[ \mu( \{ s\in\SymmSpace : d(Y,Y^\star)\geq \delta \} ) > 0 .\]
Therefore, we have
	\[ \int\limits_\SymmSpace
			d(Y,Y^\star) - d(Y^s,Y^\star) \mu(\dif s)
	\geq \delta\mu( \{ s\in\SymmSpace : d(Y,Y^\star)\geq \delta \} ) > 0 .\]
This concludes the first part of the alternative.

For the converse, assume that for every $X\in\admissible$,
the sequence $(X^{S_1\dots S_n})_{n\in\integers}$ converges almost-surely
to $X^\star$. Assume by contradiction
that there exists $X\in\admissible$, $X\neq X^\star$, such that
	\[ \mu( \{ s\in\SymmSpace : X\neq X^s \} ) = 0 \]
or, equivalently,
	\[ \mathbb{P}(\{
		\omega\in\Omega : X=X^{S_1(\omega)} \} ) = 1 .\]
Then the sequence $(X^{S_1\dots S_n})_{n\in\integers}$ is almost-surely
constant and equals $X$, hence $X=X^\star$ by assumption
on $(S_n)_{n\in\integers}$.
\end{proof}

\subsection{Markov processes}\label{subsection.Markov}

In order to deal with Markov processes, we recall some classical
terminology about transition functions.
In a metrizable topological space $\SymmSpace$ with countable basis,
a transition function on $\SymmSpace$ is a function
	\[ P : \SymmSpace \times \mathscr{B}(\SymmSpace) \to [0,1] : (s,A)\mapsto P_s(A) \]
such that
	\begin{enumerate}[label=(\alph*)]
		\item for every $s\in\SymmSpace$, the function
			\[ P_s : \mathscr{B}(\SymmSpace) \to [0,1] : A\mapsto P_s(A) \]
		is a probability measure,
		\item for every Borel measurable and bounded function
		$f:\SymmSpace\to\reals$, the function
			\[ Pf : \SymmSpace \to [0,1] :
					s\mapsto P_sf = \int\limits_\SymmSpace f(y)\,P_s(\dif y) \]
		is (bounded and) Borel measurable.
	\end{enumerate}
For every $n\in\integers$
and every rectangle $A_1\times\dots\times A_n\in\mathscr{B}(\SymmSpace)^n$, the iterated kernel $P^n$ is
	\begin{equation}\label{iteratedKernels1}
		P^n(A_1\times\dots\times A_n) : \SymmSpace \to \reals : s \mapsto
		\int\limits_{\SymmSpace}\cdots\int\limits_{\SymmSpace}
		\Big( \prod_{i=1}^n\chi_{A_i}(s_i) \Big)
			P_{s_{n-1}}(\dif s_n)\cdots\,P_{s_1}(\dif s_2)\,P_s(\dif s_1)
	\end{equation}
We recall~\cite{Tweedie}*{Chapter~3} that the stochastic process $(S_n)_{n\in\integers}$
is a time-homogeneous Markov process on $\SymmSpace$ if
there exists a transition function $P$ on $\SymmSpace$ such that
for all $n,k\in\integers$ with $k>1$,
for every Borel set $A_1\times\dots\times A_k\subseteq\SymmSpace^k$,
we have (almost-surely)
	\begin{equation}\label{iteratedKernels2}
	\mathbb{E}\{ \chi_{A_1\times\dots\times A_k}(S_{n+1},\dots,S_{n+k}) \big| \mathcal{F}_n \}
		= P^n_{S_n}(A_1\times\dots\times A_k).
	\end{equation}
Since we deal with discrete time processes, this equality extends to stopping times. Furthermore, identities (\ref{iteratedKernels1})
and (\ref{iteratedKernels2}) directly extend to bounded and continuous
functions $f:\SymmSpace^n\to\reals$, following the same approximation scheme of \cref{lemma.infGoesOut}.
\begin{proposition}\label{proposition.markov}
Let $\SymmSpace$ be a $\star$-compatible symmetrization space
such that for every $X\in\admissible$, the set
	\[ X^\SymmSpace = \{ X^{s_1\dots s_n}:n\in\integers, s_1,\dots, s_n\in\SymmSpace \} \]
has compact closure in $(\admissible,d)$.
Let $\inertie$ be a $\star$-compatible asymmetry function on $\SymmSpace$.
Assume that there exists $\boundary{\SymmSpace}\in\mathscr{B}(\SymmSpace)$ such that
$\inertie$ is strict on $\SymmSpace\setminus\boundary{\SymmSpace}$.
If there exists $\limitPoint\in\SymmSpace$ such that
	\begin{enumerate}[label=(\roman*)]
		\item\label{recurrence.proposition.markov} (\textit{Recurrence}) for every
		nonempty open set
		$\limitPoint\in\mathcal{O}\subseteq\SymmSpace$, we have
			\[ \mathbb{P}( (S_n)_{n\in\integers}\ \textrm{enters}\ \mathcal{O}\ \textrm{infinitely many often} )
			= 1 ,\]
		\item\label{continuity.proposition.markov} (\textit{Continuity})
		for every $n\in\integers$, for every bounded and continuous function
		$f:\SymmSpace^n\to\reals$, the function $P^nf$ is continuous
		at $\limitPoint$,
		\item\label{discrimination.proposition.markov} (\textit{Discrimination})
		for every $X\in\admissible$ with $X\neq X^\star$, we have
		\[  \sum_{n\in\integers}P^n_\limitPoint(
		 	(\SymmSpace\setminus\boundary{\SymmSpace})^{n-1}\times
		 	\{s\in\SymmSpace: X\neq X^s \}) > 0 , \]
	\end{enumerate}
then for every $X\in\admissible$, the sequence
$(X^{S_1\dots S_n})_{n\in\integers}$ converges almost-surely to $X^\star$.
\end{proposition}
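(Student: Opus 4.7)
The plan is to apply the abstract criterion \cref{theorem.BF} with stopping times built from successive returns of the chain to a small open neighborhood of $\limitPoint$, the gaps between successive returns being forced to exceed a fixed integer $N$. By the strong Markov property, the one-block contribution
\[
	\mathbb{E}\{\inertie(Y)-\inertie(Y^{S_{N_n+1}\dots S_{N_{n+1}}})\big|\mathcal{F}_{N_n}\}
\]
reduces to the kernel expectation against $P^N_{S_{N_n}}$ (up to the monotonicity of $\inertie$ along additional symmetrizations), and the three hypotheses together should bound this quantity from below uniformly in $Y\in\controlSet$ and in $S_{N_n}$ close to $\limitPoint$.

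Fix $X\in\admissible$ with $X\neq X^\star$ and $\varepsilon>0$, and let $\controlSet$ be the compact set from \cref{theorem.BF}. Since $\SymmSpace$ is $\star$-compatible and $\star$ is continuous by nonexpansiveness, $Y^\star=X^\star$ for every $Y\in\overline{X^\SymmSpace}$; thus every $Y\in\controlSet$ satisfies $Y\neq Y^\star$, and the discrimination hypothesis~\ref{discrimination.proposition.markov} applies to $Y$. Combined with the strictness of $\inertie$ on $\SymmSpace\setminus\boundary{\SymmSpace}$, this yields, for each $Y\in\controlSet$, some $n_Y\in\integers$ with
\[
	\delta_Y := \int \bigl[\inertie(Y)-\inertie(Y^{s_1\dots s_{n_Y}})\bigr]\,P^{n_Y}_\limitPoint(\dif s_1\cdots\dif s_{n_Y}) > 0 .
\]

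The crucial step is to promote this pointwise positivity into a uniform lower bound on $\controlSet$. For each fixed $n$, continuity of $\inertie$ and of the symmetrization action on the compact orbit make the map $Y\mapsto\int[\inertie(Y)-\inertie(Y^{s_1\dots s_n})]\,P^n_\limitPoint(\dif s_1\cdots\dif s_n)$ continuous on $\overline{X^\SymmSpace}$; it is also monotone non-decreasing in $n$ since $\inertie$ is non-increasing under extra symmetrizations. A standard open-cover argument on the compact $\controlSet$ therefore produces a single $N\in\integers$ and a constant $c>0$ with the above integral $\geq c$ for every $Y\in\controlSet$, and the continuity hypothesis~\ref{continuity.proposition.markov}, applied uniformly in $Y$ via compactness, extends this bound to some open $\mathcal{O}\ni\limitPoint$ with $c/2$ in place of~$c$. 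Defining the stopping times recursively by $N_0=0$ and $N_{n+1}=\inf\{k\geq N_n+N:S_k\in\mathcal{O}\}$ (almost-surely finite by recurrence~\ref{recurrence.proposition.markov}), the strong Markov property yields, almost-surely for every $Y\in\controlSet$,
\[
	\mathbb{E}\{\inertie(Y)-\inertie(Y^{S_{N_n+1}\dots S_{N_{n+1}}})\big|\mathcal{F}_{N_n}\}\geq c/2,
\]
the monotonicity of $\inertie$ absorbing the extra steps between $N_n+N$ and $N_{n+1}$; the series in~\eqref{eqn.BF} therefore diverges and \cref{theorem.BF} concludes.

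The hard part of this program is the uniformity step: passing from the pointwise bound $\delta_Y>0$ to a uniform constant over $\controlSet$, and transferring the estimate from $\limitPoint$ to a neighborhood through the rather weak one-sided continuity~\ref{continuity.proposition.markov}. Subsidiary to this is establishing $\delta_Y>0$ in the first place; turning the positive-probability event produced by the discrimination hypothesis into a positive expected drop of $\inertie$ relies on the strictness of $\inertie$ on $\SymmSpace\setminus\boundary{\SymmSpace}$, and is most delicate precisely when the final symmetrization $s_{n_Y}$ falls into the possibly isometric boundary part $\boundary{\SymmSpace}$.
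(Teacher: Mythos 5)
Your proposal is correct in outline, but the route is genuinely different from the paper's. Where you produce a \emph{uniform} bound, the paper deliberately avoids one: its stopping times are built from a \emph{shrinking} sequence of neighborhoods $\mathcal{O}_n\downarrow\{\limitPoint\}$ with \emph{increasing} lags $N_{n+1}-N_n\geq n$, and after fixing a sample path it runs a diagonal argument (minimizers $Y_n\in\controlSet$, a convergent subsequence $Y_{n_k}\to Y$, continuity at $\limitPoint$ contributing an error $2^{-k}$, monotonicity of $\inertie$ to replace $P^{\ell_{n_k}}_\limitPoint$ by $P^k_\limitPoint$), ultimately reducing divergence of the series to the single inequality $\sup_n\int[\inertie(Y)-\inertie(Y^{s_1\dots s_n})]\,P^n_\limitPoint>0$. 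Your version is structurally cleaner: the monotonicity of $n\mapsto\Phi_n(Y)=\int[\inertie(Y)-\inertie(Y^{s_1\dots s_n})]\,P^n_\limitPoint$, together with continuity of $\Phi_n$ on $\controlSet$ (which follows from uniform continuity of $\inertie$ on the compact orbit plus nonexpansiveness), and a finite subcover of $\controlSet$, produces a single $N$ and $c>0$; a second compactness argument — decomposing $\bigl|\int f_{Y'}\,dP^N_s-\int f_Y\,dP^N_\limitPoint\bigr|$ into $\lVert f_{Y'}-f_Y\rVert_\infty$ and the continuity error at $\limitPoint$ — transfers the bound to a fixed neighborhood $\mathcal{O}$, so that with fixed lag $N$ the conditional expectation is pointwise $\geq c/2$ and the series trivially diverges. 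A side benefit of the fixed lag is that the strong Markov step is cleaner: with a random return time $N_{n+1}$, the quantity $\mathbb{E}\{\inertie(Y)-\inertie(Y^{S_{N_n+1}\dots S_{N_{n+1}}})\mid\mathcal{F}_{N_n}\}$ should first be lower-bounded by the deterministic-lag version before invoking the Markov property, a point the paper's pathwise notation glosses over. Both proofs rest on the same core positivity step $\Phi_{n_Y}(Y)>0$, proved by contradiction using the discrimination and strictness hypotheses; you correctly flag that the case where the final coordinate $s_{n_Y}$ lands in $\boundary{\SymmSpace}$ is the delicate one — the paper handles it by restricting the last coordinate to $\SymmSpace\setminus\boundary{\SymmSpace}$ in the event whose positive $P^{n_Y}_\limitPoint$-measure it extracts, and that restriction is what lets strictness produce a definite drop $\delta>0$ — so make sure your write-up of that step follows the same pattern rather than reasoning only from the unrestricted event.
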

\begin{proof}
We apply \cref{theorem.BF}.
Let us consider a sequence $(\mathcal{O}_n)_{n\in\integers}$ of
nonempty open sets in $\SymmSpace$
decreasing to the limit point $\limitPoint$, that is
	\[ \bigcap_{n\in\integers}\mathcal{O}_n=\{\limitPoint\} .\]
We define the stopping time
$N_1 = \min\{ k\in\integers : S_k\in\mathcal{O}_1 \}$,
and for every $n\in\integers$,
	\[ N_{n+1} = \min\{ k\in\integers : k\geq N_n+n , S_k\in\mathcal{O}_{n+1} \}.\]
By recurrence assumption \ref{recurrence.proposition.markov},
the sequence $(N_n)_{n\in\integers}$ is a sequence of stopping times
which is almost-surely finite and satisfies almost-surely $S_{N_n}\in\mathcal{O}_n$
and $N_{n+1}-N_n\geq n$. Fix $\omega\in\Omega$ such that the previous relations hold, and write $(s_n)_{n\in\integers}=(S_{N_n}(\omega))_{n\in\integers}$ and
$\ell_n=N_{n+1}(\omega)-N_n(\omega)$ for every $n\in\integers$.
The sequence $(s_n)_{n\in\integers}$ converges to $\limitPoint$.
Fix $X\in\admissible$ with $\inertie(X)\geq\inertie(X^\star)+\varepsilon$
(otherwise there is nothing to prove).
By \cref{theorem.BF}, we only need to show
	\begin{multline}
		\sum_{n\in\integers} \inf
		\Big\{ \int\limits_{\SymmSpace^{\ell_n}}
			\inertie( Y )- \inertie( Y^{u_1\dots u_{\ell_n}} )
		\,P^{\ell_n}_{s_n}(\dif u_1,\dots,\dif u_{\ell_n}) \,:\\[-1em]
		Y\in\overline{X^\SymmSpace},\,\inertie(Y)\geq \inertie(X^\star)+\varepsilon \Big\} = +\infty, \label{eqn1.proposition.markov}
	\end{multline}
By compactness of $\controlSet$, for every $n\in\integers$, there exists $Y_n\in\controlSet$ such that
	\begin{align*}
		\inf_{Y\in\controlSet}
			\int\limits_{\SymmSpace^{\ell_n}}
			\inertie(Y) &- \inertie(Y^{u_1\dots u_{\ell_n}})\,P^{\ell_n}_{s_n}(\dif u_1,\dots, \dif u_{\ell_n}) \\
		&=\int\limits_{\SymmSpace^{\ell_n}}
			\inertie(Y_n) - \inertie(Y_n^{u_1\dots u_{\ell_n}})\,P^{\ell_n}_{s_n}(\dif u_1,\dots,\dif u_{\ell_n})
	\end{align*}
Since $\overline{X^\SymmSpace_\varepsilon}$ is compact, there exists a subsequence
$(Y_{n_k})_{k\in\integers}$ that converges to some $Y\in\overline{X^\SymmSpace_\varepsilon}$.
Without loss of generality, we can assume
	\begin{align*}
		\int\limits_{\SymmSpace^{\ell_{n_k}}}
			\inertie(Y_{n_k}) &- \inertie(Y_{n_k}^{u_1\dots u_{\ell_{n_k}}})\,P^{\ell_{n_k}}_{s_{n_k}}(\dif u_1,\dots,\dif u_{\ell_{n_k}}) \\
	&\geq \int\limits_{\SymmSpace^{\ell_{n_k}}}
			\inertie(Y) - \inertie(Y^{u_1\dots u_{\ell_{n_k}}})\,P^{\ell_{n_k}}_{s_{n_k}}(\dif u_1,\dots,\dif u_{\ell_{n_k}}) - \frac{1}{2^k} .
	\end{align*}
It is now sufficient to check that
	\[ \sum_{k\in\integers}
		\int\limits_{\SymmSpace^{\ell_{n_k}}}
			\inertie(Y) - \inertie(Y^{u_1\dots u_{\ell_{n_k}}})\,P^{\ell_{n_k}}_{s_{n_k}}(\dif u_1,\dots,\dif u_{\ell_{n_k}})
		= +\infty .\]
By the continuity assumption \ref{continuity.proposition.markov}, for all
$k\in\integers$, there exists a smaller integer $j_k\in\integers$
such that for all $j\geq j_k$,
	\begin{align*}
		\bigg| \int\limits_{\SymmSpace^{\ell_{n_k}}} \inertie(Y)
			&- \inertie(Y^{u_1\dots u_{\ell_{n_k}}})\,P^{\ell_{n_k}}_{s_j}(\dif u_1,\dots,\dif u_{\ell_{n_k}}) \\
		&- \int\limits_{\SymmSpace^{\ell_{n_k}}} \inertie(Y)
			- \inertie(Y^{u_1\dots u_{\ell_{n_k}}})\,P^{\ell_{n_k}}_{\limitPoint}(\dif u_1,\dots,\dif u_{\ell_{n_k}}) \bigg|
		\leq \frac{1}{2^k} .
	\end{align*}
Define $m_1=\min\{n_k:n_k\geq j_1\}$, and by recurrence
	\[ m_{k+1}= \min\{ n_k : n_k\geq\max\{m_k+1,j_{k+1},n_{(k+1)}\} \} . \]
By construction, $(m_k)_{k\in\mathbf{N}}$ is a
subsequence of $(n_k)_{n\in\mathbf{N}}$ such that $m_k\geq j_k$
for all $k\in\mathbf{N}$.
Since the asymmetry decreases along symmetrizations, we have for every $k\in\integers$
	\begin{align*}
		\int\limits_{\SymmSpace^{\ell_{m_k}}} \inertie(Y)
			& - \inertie(Y^{u_1\dots u_{\ell_{m_k}}}) \,P^{\ell_{m_k}}_{s_{m_k}}(\dif u_1,\dots,\dif u_{\ell_{m_k}}) \\
			&\geq \int\limits_{\SymmSpace^{\ell_{n_k}}}
			\inertie(Y) - \inertie(Y^{u_1\dots u_{\ell_{n_k}}}) \,P^{\ell_{n_k}}_{s_{m_k}}(\dif u_1,\dots,\dif u_{\ell_{n_k}}) \\
			&\geq \int\limits_{\SymmSpace^{\ell_{n_k}}}
			\inertie(Y) - \inertie(Y^{u_1\dots u_{\ell_{n_k}}}) \,P^{\ell_{n_k}}_{\limitPoint}(\dif u_1,\dots,\dif u_{\ell_{n_k}})
			 - \frac{1}{2^k} \\
			&\geq \int\limits_{\SymmSpace^k}
			\inertie(Y) - \inertie(Y^{u_1\dots u_k}) \,P^k_{\limitPoint}(\dif u_1,\dots,\dif u_k)
			 - \frac{1}{2^k} .
	\end{align*}
In the previous line, we have used the fact
that $\ell_{n_k}\geq n_k\geq k$ by construction.
If we prove the strict inequality
	\begin{equation}\label{eqn2.proposition.markov}
		\sup_{n\in\integers}\int\limits_{\SymmSpace^n}
		\inertie(Y) - \inertie(Y^{u_1\dots u_n})\, P^n_{\limitPoint}(\dif u_1,\dots,\dif u_n) > 0,
	\end{equation}
then by comparison of series, condition (\ref{eqn1.proposition.markov}) would then hold.
Let us thus prove (\ref{eqn2.proposition.markov}), where $Y\in\admissible$
satisfies $Y\neq Y^\star$.
By the discrimination assumption \ref{discrimination.proposition.markov},
there exists $n\in\integers$ such that
	\[ P^n_\limitPoint((\SymmSpace\setminus\boundary{\SymmSpace})^{n-1}\times \{s\in\SymmSpace\setminus\boundary{\SymmSpace}:Y\neq Y^s \} )
	> 0 .\]
Since $\inertie $ is strict on $\SymmSpace\setminus\boundary{\SymmSpace}$,
there exists $\delta>0$ such that $P^n_\limitPoint(H)>0$,
with
	\[ H = (\SymmSpace\setminus\boundary{\SymmSpace})^{n-1}\times
		 	\{s\in\SymmSpace:\inertie(Y)\geq\inertie(Y^s)+\delta \} . \]
Let us assume by contradiction that we have
	\[ \int\limits_{\SymmSpace^n}
		\inertie(Y)-\inertie(Y^{s_1\dots s_n})\,P^n_\limitPoint(\dif s_1,\dots,\dif s_n) = 0 .\]
There exists a set $E\in\mathscr{B}(\SymmSpace^n)$
of $P^n_\limitPoint$-measure $1$ such that,
for every $(s_1,\dots,s_n)\in E$, we have
$\inertie(Y) = \inertie(Y^{s_1\dots s_n})$.
For $(s_1,\dots,s_n)\in E$, with $s_1\notin\boundary{\SymmSpace}$,
we have $\inertie(Y)=\inertie(Y^{s_1})$ and,
since $\inertie$ is a strict asymmetry function on $\SymmSpace\setminus\boundary{\SymmSpace}$,
we have $Y = Y^{s_1}$, so that we also have $\inertie(Y)=\inertie(Y^{s_2})$.
By recurrence, we have $Y = Y^{s_i},i=1,\dots,n-1$,
for every $(s_1,\dots,s_n)\in E$ with $s_i\notin\boundary{\SymmSpace}$ for each $i\in\{1,\dots,n-1\}$.
We now have
	\begin{align*}
		0 &= \int\limits_{\SymmSpace^n}
			\inertie(Y)-\inertie(Y^{s_1\dots s_n})\,P^n_\limitPoint(\dif s_1,\dots,\dif s_n) \\
		&\geq \int\limits_{H\cap E}
			\inertie(Y)-\inertie(Y^{s_n})\,P^n_\limitPoint(\dif s_1,\dots,\dif s_n)
			\geq \delta P^n_\limitPoint(H) > 0,
	\end{align*}
which is the desired contradiction. This proves
	\[ \sup_{n\in\integers} \int\limits_{\SymmSpace^n}
		\inertie(Y)-\inertie(Y^{s_1\dots s_n})\,P^n_\limitPoint(\dif s_1,\dots,\dif s_n)  > 0 ,\]
and (\ref{eqn2.proposition.markov}) holds.
\end{proof}
In practice, the proof of the existence of a recurrent point $\limitPoint$ requires some
additional work. We recall a notion of stability that is useful for locally compact symmetrization spaces.
\begin{definition}[\cite{Tweedie}*{Section~9.2}]
Let $\SymmSpace$ be a locally compact metrizable topological space with countable basis,
and $(S_n)_{n\in\integers}$ a Markov process on $\SymmSpace$. The process $(S_n)_{n\in\integers}$ is said to be \term{nonevanescent} if
	\[ \mathbb{P}(
		\exists K\subseteq\SymmSpace : K\ \textrm{compact}, (S_n)_{n\in\integers}
			\ \textrm{enters}\ K\ \textrm{infinitely many often}
	 ) = 1 .\]
\end{definition}
\noindent The topological assumptions on $\SymmSpace$ ensure that the above definition
makes sense.
\begin{proposition}[\cite{Tweedie}*{Theorem~9.1.3}]\label{proposition.markovLimitTheorem}
Let $\SymmSpace$ be a metrizable topological space with countable basis,
and $(S_n)_{n\in\integers}$ be a Markov process on $\SymmSpace$.
For every $A\in\mathscr{B}(\SymmSpace)$, the sequence 
$(\inf_{k\in\integers}P^k_{S_n}(\,(\SymmSpace\setminus A)^k))_{n\in\integers}$
converges almost-surely to the indicator function of the set
	\[ \bigcup_{n\in\integers}\bigcap_{k\geq n}\{S_{k+1}\notin A\} .\]
\end{proposition}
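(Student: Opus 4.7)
My plan is to recast the given quantity as a conditional expectation and then apply martingale convergence. By the iterated-kernel identity \eqref{iteratedKernels2},
\[ P^k_{S_n}((\SymmSpace\setminus A)^k) = \mathbb{P}\{ S_{n+1}\notin A,\dots,S_{n+k}\notin A \big| \mathcal{F}_n \} \]
almost-surely, and the right-hand side is nonincreasing in $k$. Setting $B_n = \bigcap_{k\geq n}\{S_{k+1}\notin A\}$ and $B = \bigcup_{n\in\integers}B_n$, monotone convergence for conditional expectations gives
\[ Z_n := \inf_{k\in\integers} P^k_{S_n}((\SymmSpace\setminus A)^k) = \mathbb{E}\{\chi_{B_n} \big| \mathcal{F}_n\} \]
almost-surely, and the task becomes to prove $Z_n\to\chi_B$ almost-surely.

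The crucial structural observation is that $(B_n)_{n\in\integers}$ is nondecreasing, since imposing fewer constraints enlarges the event. By the tower property,
\[ \mathbb{E}\{Z_{n+1}\big|\mathcal{F}_n\} = \mathbb{E}\{\chi_{B_{n+1}}\big|\mathcal{F}_n\} \geq \mathbb{E}\{\chi_{B_n}\big|\mathcal{F}_n\} = Z_n, \]
so $(Z_n)_{n\in\integers}$ is a bounded submartingale. Doob's convergence theorem then yields almost-sure and $L^1$ convergence of $(Z_n)$ to some limit $Z_\infty$, and ordinary monotone convergence gives $\mathbb{E}[Z_\infty] = \lim_n \mathbb{P}(B_n) = \mathbb{P}(B)$.

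To identify $Z_\infty$ with $\chi_B$, I would compare $(Z_n)$ with the genuine Doob martingale $\mathbb{E}\{\chi_B\big|\mathcal{F}_n\}$. The inclusion $B_n\subseteq B$ gives the pointwise inequality $Z_n \leq \mathbb{E}\{\chi_B\big|\mathcal{F}_n\}$, and Lévy's upward theorem sends the right-hand side to $\chi_B$ almost-surely; hence $Z_\infty\leq \chi_B$ almost-surely. The matching of expectations $\mathbb{E}[Z_\infty] = \mathbb{E}[\chi_B]$ forces the nonnegative difference $\chi_B-Z_\infty$ to vanish almost-surely, and therefore $Z_\infty = \chi_B$ almost-surely, as required.

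The argument is short; the only mildly technical step is the initial identification of $Z_n$ with $\mathbb{E}\{\chi_{B_n}\big|\mathcal{F}_n\}$, which requires passing from the rectangles appearing in \eqref{iteratedKernels2} to the countable intersection $B_n$ via monotone convergence for conditional expectations. Once this passage is justified, everything else reduces to Doob's submartingale convergence combined with a one-line application of Lévy's upward theorem.
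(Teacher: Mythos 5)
Your proof is correct, and it takes a route that differs in an instructive way from the paper's. The key identification via the Markov property and conditional monotone convergence, namely $Z_n := \inf_k P^k_{S_n}((\SymmSpace\setminus A)^k) = \mathbb{E}\{\chi_{B_n}\mid\mathcal{F}_n\}$, is the same in both arguments. After that the two proofs diverge. The paper works with the complementary events $\Omega_n = \bigcup_{i\geq n}\{S_{i+1}\in A\}$ and sandwiches $H(S_n)=\mathbb{E}\{\chi_{\Omega_n}\mid\mathcal{F}_n\}$ between $\mathbb{E}\{\chi_{\Omega_\infty}\mid\mathcal{F}_n\}$ and $\mathbb{E}\{\chi_{\Omega_m}\mid\mathcal{F}_n\}$ for $m\leq n$, sends $n\to\infty$ by Lévy's upward theorem on each side, and then lets $m\to\infty$. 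You instead observe that $(B_n)$ is increasing, so $(Z_n)$ is a bounded submartingale; Doob's theorem gives almost-sure convergence to some $Z_\infty$, the one-sided comparison $Z_n\leq\mathbb{E}\{\chi_B\mid\mathcal{F}_n\}\to\chi_B$ (Lévy again) gives $Z_\infty\leq\chi_B$, and matching $\mathbb{E}[Z_\infty]=\mathbb{P}(B)$ forces equality. Your approach trades the paper's two-sided sandwich for a one-sided bound plus an expectation-matching argument, which is a nice illustration of the standard ``submartingale dominated by a martingale with the same expectation'' trick. Two small remarks: the phrase ``ordinary monotone convergence gives $\mathbb{E}[Z_\infty]=\lim_n\mathbb{P}(B_n)$'' should really invoke bounded (or dominated) convergence for $\mathbb{E}[Z_n]\to\mathbb{E}[Z_\infty]$, since the $Z_n$ themselves need not be pointwise monotone; and it is worth noting explicitly that $B\in\mathcal{F}$ so that Lévy's theorem indeed yields $\mathbb{E}\{\chi_B\mid\mathcal{F}_n\}\to\chi_B$ rather than merely to $\mathbb{E}\{\chi_B\mid\mathcal{F}\}$.
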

\begin{proof}
We fix a Borel measurable set $A\in\mathscr{B}(\SymmSpace)$, and we consider the function
	\[ H : \SymmSpace \mapsto [0,1] : H(s) = 1 - \inf_{k\in\integers}
		P^k_{s}(\,(\SymmSpace\setminus A)^k\, ) .\]
We write, for every $n\in\integers$,
	\[ \Omega_n = \bigcup_{i\geq n}\{S_{i+1}\in A\} \in \mathcal{F} ,\]
and
	\[ \Omega_\infty = \bigcap_{m\in\integers}\bigcup_{i\geq m}\{S_{i+1}\in A\} .\]
For every $n\in\integers$, the Markov property ensures that almost-surely
	$ H(S_n) = \mathbb{E}\{ \chi_{\Omega_n} \big| \mathcal{F}_n \}$.
Moreover, we also have for each $m\leq n$
	\[ \mathbb{E}\{ \chi_{\Omega_\infty}\big| \mathcal{F}_n \}
		\leq H(S_n) \leq \mathbb{E}\{ \chi_{\Omega_m}\big| \mathcal{F}_n \} .\]
For a fixed $m\in\integers$, the martingale convergence theorem ensures that
the left side converges almost-surely to $\mathbb{E}\{\chi_{\Omega_\infty}\big|\mathcal{F}\}=\chi_{\Omega_\infty}$, and
that the right side converges almost-surely to
$\mathbb{E}\{\chi_{\Omega_m}\big|\mathcal{F}\}=\chi_{\Omega_m}$ as $n\to+\infty$.
Therefore, we have
	\[ \chi_{\Omega_\infty} \leq \liminf_{n\to+\infty} H(S_n) \leq \limsup_{n\to+\infty} H(S_n)
		\leq \chi_{\Omega_m} ,\]
almost-surely for every $m\in\integers$. Letting $m\in\integers$ tends to $+\infty$, we
have almost-surely
	\[ \lim_{n\to+\infty}\inf_{k\in\integers}
		P^k_{S_n}(\,(\SymmSpace\setminus A)^k\, )
	= 1 - \lim_{n\to+\infty}H(S_n) = 1 - \chi_{\Omega_\infty} .\qedhere \]
\end{proof}
\begin{corollary}\label{corollary.markov.quick}
Let $\SymmSpace$ be a $\star$-compatible and locally compact symmetrization space
such that for every $X\in\admissible$, the set
	\[ X^\SymmSpace = \{ X^{s_1\dots s_n}:n\in\integers, s_1,\dots, s_n\in\SymmSpace \} \]
has compact closure in $(\admissible,d)$.
Let $\inertie$ be a $\star$-compatible asymmetry function on $\SymmSpace$.
Assume that there exists $\boundary{\SymmSpace}\subset\SymmSpace$ such that
$\inertie$ is strict on $\SymmSpace\setminus\boundary{\SymmSpace}$.
Let $P:\SymmSpace\times\mathscr{B}(\SymmSpace)\to[0,1]$ be a
transition function, and
$(S_n)_{n\in\integers}$ be a time-homogeneous Markov process with transition function
$P$. If
	\begin{enumerate}[label=(\roman*)]
		\item (\textit{Stability}) the process $(S_n)_{n\in\integers}$ is
		nonevanescent and $\boundary{\SymmSpace}$ is closed,
		\item (\textit{Continuity})
		for all $n\in\integers$, for every bounded and continuous function
		$f:\SymmSpace^n\to\reals$, the function $P^nf$ is continuous,
		\item (\textit{Discrimination})
		for every $s\in\SymmSpace$, for every nonempty open set $\mathcal{O}\subseteq\SymmSpace$,
		\[  \sum_{n\in\integers}P^n_s(
		 	(\SymmSpace\setminus\boundary{\SymmSpace})^{n-1}\times \mathcal{O} ) > 0 , \]
	\end{enumerate}
then for every $X\in\admissible$, the sequence
$(X^{S_1\dots S_n})_{n\in\integers}$ converges almost-surely to $X^\star$.
\end{corollary}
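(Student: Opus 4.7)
The plan is to apply \cref{proposition.markov} with any fixed choice of $\limitPoint\in\SymmSpace$. Continuity at $\limitPoint$ follows directly from the global continuity hypothesis of the corollary. For the discrimination condition of \cref{proposition.markov}, note that for every $X\in\admissible$ with $X\neq X^\star$, the set $\{s\in\SymmSpace:X^s\neq X\}$ is open (by continuity of the symmetrization map $(X,s)\mapsto X^s$) and nonempty (by $\star$-compatibility of $\SymmSpace$); applying the corollary's discrimination hypothesis to this open set yields exactly what \cref{proposition.markov} demands.

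The main task is to establish recurrence at $\limitPoint$. I would in fact prove the stronger statement that, for every nonempty open set $A\subseteq\SymmSpace$, almost surely the process $(S_n)_{n\in\integers}$ enters $A$ infinitely often. The crucial observation is that the map
	\[ s\mapsto\inf_{k\in\integers}P^k_s((\SymmSpace\setminus A)^k) \]
is upper semi-continuous on $\SymmSpace$ and strictly less than $1$ at every point. Upper semi-continuity is obtained by approximating the indicator of the closed set $(\SymmSpace\setminus A)^k$ from above by a decreasing sequence of bounded continuous functions $(f_m)_{m\in\integers}$, for instance $f_m=\max(0,1-m\,d(\cdot,(\SymmSpace\setminus A)^k))$. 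The continuity hypothesis gives that each $P^k f_m$ is continuous, and monotone convergence gives $P^k f_m\downarrow P^k\chi_{(\SymmSpace\setminus A)^k}$; hence the latter function is upper semi-continuous in $s$, and so is the infimum over $k$. The strict inequality follows from the corollary's discrimination hypothesis applied with $\mathcal{O}=A$: there is some $n\in\integers$ with $P^n_s(\SymmSpace^{n-1}\times A)>0$, and since the last coordinate of an element of $(\SymmSpace\setminus A)^n$ cannot lie in $A$, one has $P^n_s((\SymmSpace\setminus A)^n)\leq 1 - P^n_s(\SymmSpace^{n-1}\times A) < 1$.

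To conclude, I would use local compactness and second countability of $\SymmSpace$ to choose a countable cover $(V_m)_{m\in\integers}$ by open sets with compact closures $K_m=\overline{V_m}$. Nonevanescence, combined with a finite-subcover pigeonhole argument, ensures that almost surely some $K_m$ is visited infinitely often. On each compact $K_m$, upper semi-continuity yields a uniform bound $M_m<1$ on $\inf_{k\in\integers}P^k_\cdot((\SymmSpace\setminus A)^k)$. Along the subsequence of visit times to $K_m$, the sequence $\inf_{k\in\integers}P^k_{S_n}((\SymmSpace\setminus A)^k)$ is therefore bounded above by $M_m<1$. But \cref{proposition.markovLimitTheorem} identifies the almost sure limit of this sequence as the indicator of the event ``$S_n\notin A$ eventually''. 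Hence this limit is strictly less than $1$ on $\{S_n\in K_m \textrm{ infinitely often}\}$, forcing the event ``$S_n\notin A$ eventually'' to be negligible on each such set. A countable union over $m$ then gives that almost surely the process enters $A$ infinitely often, which delivers the required recurrence.

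The main technical obstacle is the upper semi-continuity step, which requires carefully combining the continuity hypothesis with the approximation by continuous functions; the remaining steps assemble routinely from \cref{proposition.markovLimitTheorem}, nonevanescence, and the Lindelöf property of locally compact second-countable metrizable spaces.
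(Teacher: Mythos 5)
Your proof is correct and follows essentially the same route as the paper's: reduce to \cref{proposition.markov}, establish recurrence at an arbitrary $\limitPoint$ by combining \cref{proposition.markovLimitTheorem} with a semi-continuity argument (your upper semi-continuity of $\inf_k P^k_\cdot((\SymmSpace\setminus A)^k)$ is the paper's lower semi-continuity of $H=1-\inf_k P^k_\cdot(\cdot)$ via Urysohn), derive the strict bound away from $1$ from the discrimination hypothesis by observing that $(\SymmSpace\setminus A)^n$ and $\SymmSpace^{n-1}\times A$ are disjoint, and finish with nonevanescence plus a countable cover by relatively compact open sets. The minor stylistic differences (direct inequality in place of the paper's contradiction, an explicit approximating sequence $f_m$ in place of the bare invocation of Urysohn's lemma) do not change the substance.
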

\begin{proof}
Let us fix $\limitPoint\in\SymmSpace$.
According to \cref{proposition.markov}, it is sufficient to
check that for every nonempty open set $\mathcal{O}\subseteq\SymmSpace$
containing $\limitPoint$, we have
	\[ \mathbb{P}( \{
		(S_n)_{n\in\integers}\ \textrm{enters}\ \mathcal{O}
			\ \textrm{infinitely many often}
	 \} ) = 1 .\]
We thus fix a nonempty open set $\mathcal{O}\subseteq\SymmSpace$ that contains $\limitPoint$, and we define the function
	\[ H : \SymmSpace \mapsto [0,1] : H(s) = 1 - \inf_{k\in\integers}
		P^k_{s}(\,(\SymmSpace\setminus\mathcal{O})^k\, ) .\]
By \cref{proposition.markovLimitTheorem}, we have almost-surely
$\lim_{n\to+\infty}H(S_n) = \chi_{\Omega_\infty}$,
where
	\[ \Omega_\infty
			= \bigcap_{m\in\integers}\bigcup_{i\geq m}\{S_{i+1}\in\mathcal{O}\} 
			= \{ (S_n)_{n\in\integers}\ \textrm{enters}\ \mathcal{O}
					\ \textrm{infinitely many often} \} .
	\]
Assume by contradiction that there exists $s\in\SymmSpace$ with $H(s)=0$, that is:
for every $k\in\integers$, we have $P^k_s(\,(\SymmSpace\setminus\mathcal{O})^k\,) = 1$.
Since there exists $n\in\integers$ such that
$P^n_s(\,(\SymmSpace\setminus\boundary{\SymmSpace})^{n-1}\times\mathcal{O}\,) > 0$,
it follows that
	\[
		0 < P^n_s(\,((\SymmSpace\setminus\boundary{\SymmSpace})^{n-1}\times\mathcal{O})
			\cap (\SymmSpace\setminus\mathcal{O})^n \,)
		= P^n_s(\emptyset) = 0 ,
	\]
which is a contradiction. This proves that the function $H$ is strictly positive.
Using Urysohn's lemma, the function $H$ is also lower semi-continuous as supremum of semi-continuous functions. Therefore, for
all compact set $K\subseteq\SymmSpace$, the function $H$ attains a strictly positive
minimal value on $K$, which has to equal $1$. Hence, we have proven the essential inclusion
	\[ \{ (S_n)_{n\in\integers}\ \textrm{enters}\ K\ \textrm{infinitely many often} \}
	 \subseteq \Omega_\infty .\]
Since there exists a countable basis of nonempty open sets with compact closure for
$\SymmSpace$, the nonevanescence of $(S_n)_{n\in\integers}$ ensures
	\[ \mathbb{P}( \{
		(S_n)_{n\in\integers}\ \textrm{enters}\ \mathcal{O}
			\ \textrm{infinitely many often}
	 \} ) = 1 . \qedhere \]
\end{proof}
In view of examples, we prove that a strong Feller transition function $P$ always satisfy our
continuity assumption for $\{P^n:n\in\integers\}$.
\begin{definition}
Let $\SymmSpace$ be a topological space.
A transition function $P$ on $\SymmSpace$ is \term{strong Feller continuous}
at $\limitPoint$
if, for every bounded and Borel measurable function $f:\SymmSpace\to\reals$, the map
$[s\in\SymmSpace\mapsto P_sf]$ is continuous at $\limitPoint$.
\end{definition}
\begin{proposition}[\cite{Tweedie}*{Proposition~6.1.1}]\label{proposition.strongFeller}
Let $\SymmSpace$ be a metrizable topological space with countable
basis, $\limitPoint\in\SymmSpace$ and $P$ a transition function on $\SymmSpace$.
If $P$ is strong Feller continuous at $\limitPoint$, then for every $n\in\integers$,
for every bounded and continuous function
$f:\SymmSpace^n\to\integers$, the function $P^nf$ is continuous at $\limitPoint$.
\end{proposition}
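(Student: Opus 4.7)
The plan is to induct on $n$, but with a slightly strengthened inductive hypothesis to make the recursion close: for every bounded Borel measurable $f : \SymmSpace^n \to \reals$, the function $P^n f$ is continuous at $\limitPoint$. The base case $n = 1$ is precisely the strong Feller continuity of $P$ at $\limitPoint$, since a bounded continuous function on $\SymmSpace$ is a fortiori bounded Borel measurable. This strengthening is essential because, in the inductive step, the natural intermediate function $\tilde f$ built from a continuous $f$ need not itself be continuous.

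For the inductive step, given a bounded Borel measurable $f : \SymmSpace^{n+1} \to \reals$, the idea is to peel off the innermost integration against $P_{s_n}(\dif s_{n+1})$. Set
\[
\tilde f(s_1, \ldots, s_n) = \int_\SymmSpace f(s_1, \ldots, s_n, s_{n+1})\, P_{s_n}(\dif s_{n+1}) .
\]
A Fubini-type rearrangement of the iterated integrals in the definition (\ref{iteratedKernels1}) of $P^{n+1}$ immediately yields $P^{n+1} f(s) = P^n \tilde f(s)$. Provided that $\tilde f$ is bounded and Borel measurable on $\SymmSpace^n$, the inductive hypothesis applied to $\tilde f$ gives the continuity of $P^{n+1} f = P^n \tilde f$ at $\limitPoint$, closing the induction.

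The remaining step is to verify that $\tilde f$ is bounded and Borel measurable, which is where the only real technical content lies, though it is standard. Boundedness is immediate. For Borel measurability, one applies the monotone class theorem: the result holds for indicator functions of measurable rectangles, since for $f = \chi_{A_1 \times \cdots \times A_{n+1}}$ one has $\tilde f(s_1, \ldots, s_n) = \chi_{A_1 \times \cdots \times A_n}(s_1, \ldots, s_n)\, P_{s_n}(A_{n+1})$, and $s_n \mapsto P_{s_n}(A_{n+1})$ is Borel by the very definition of a transition function. The class of bounded Borel $f$ for which $\tilde f$ is Borel is a vector space stable under bounded monotone convergence, so by the monotone class theorem it coincides with all bounded Borel functions on $\SymmSpace^{n+1}$. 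I do not anticipate any serious obstacle beyond this bookkeeping of the iterated kernel structure; the key conceptual point is merely that one must reformulate the claim at the level of bounded Borel functions (not just bounded continuous ones) in order for the inductive hypothesis to apply to $\tilde f$.
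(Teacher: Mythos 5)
Your proof is correct, but it takes a genuinely different (though closely related) route from the paper's. You peel off the \emph{innermost} integral, writing $P^{n+1}f = P^n\tilde f$ with $\tilde f(s_1,\dots,s_n)=\int f(s_1,\dots,s_n,s_{n+1})\,P_{s_n}(\dif s_{n+1})$, which forces you to run an induction — and you correctly observe that the inductive hypothesis must be strengthened to all bounded Borel functions, since $\tilde f$ is not continuous. The paper instead peels off the \emph{outermost} integral, writing $P^{n+1}_s f = P_s g$ with $g(x)=P^n_x f(x,\cdot)$: here $g$ is a bounded Borel function on $\SymmSpace$ (again by the monotone class theorem), and the strong Feller hypothesis at $\limitPoint$ — which by definition applies to all bounded Borel functions — yields continuity of $P_s g$ at $\limitPoint$ directly, with no induction at all. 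Both arguments rest on exactly the same two pillars: a monotone class argument to show the intermediate function is bounded Borel, and the fact that strong Feller continuity is formulated for bounded Borel (not merely continuous) test functions, which is what absorbs the loss of continuity in the intermediate step. Your version buys a slightly more transparent recursion at the cost of carrying an induction; the paper's version is a one-shot reduction to a single application of the hypothesis. Both are fully rigorous, and your identification of the need to reformulate the claim at the bounded-Borel level is precisely the conceptual point the paper exploits as well.
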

\begin{proof}
If $f:\SymmSpace^n\to\reals$ is bounded and continuous, the function
	\[ g : \SymmSpace\to\reals : g(x) = P^{n}_xf(\cdot,x) \]
is bounded and Borel measurable. The measurability follows from the monotone class
theorem. Since there holds
	\[ P^{n+1}_sf = \int\limits_\SymmSpace g(x)\,P_s(\dif x) ,\]
it suffices to prove that for every bounded and Borel measurable function $g:\SymmSpace\to\reals$,
$Pg$ is continuous at $\limitPoint$. Let $M>0$ be a bound for $g$,
and define $h_1=g+M$. Then $h_1$ is Borel measurable, positive and bounded.
There exists an increasing sequence $(\phi_n)_{n\in\integers}$ of simple functions
that converges to $h_1$, and such that the level sets of $\phi_n$ are
disjoint Borel measurable sets. By linearity, the functions $P\phi_n$ are continuous at $\limitPoint$,
for every $n\in\integers$. Since the sequence $(P\phi_n)_{n\in\integers}$ increases
to $Ph_1$ by the monotone convergence theorem, the function
$Ph_1$ is lower semi-continuous, and so is $Pg$. The same conclusions hold
for the function $h_2=-g+M$, so that $Ph_2$ is lower semi-continuous, and thus
$Pg$ is also upper semi-continuous.
\end{proof}
The converse of \cref{proposition.strongFeller}
is false in general. Consider for example a continuous map $\phi:\SymmSpace\to\SymmSpace$
which is not trivial, and the transition function
	\[ P : \SymmSpace\times\mathscr{B}(\SymmSpace) \to [0,1] :
		P_s(A) = \delta_{\phi(s)}(A) .\]
The iterated kernels are given by the formula
	\[ P^n_sf = f(\,\phi(s),\dots,\phi^n(s)\,) .\]
The functions $P^nf$ are continuous whenever $f$ is itself continuous and bounded,
but the strong Feller continuity may fail in general.

\section{Examples}

In this section, we give examples for various symmetrizations as Steiner and cap
symmetrizations and polarizations. We first recall standard definitions,
and then we give examples of application of our abstract method.

\subsection{Various symmetrizations}\label{section.examples}

We write $\Haus^k$ the $k$-dimensional Hausdorff measure in $\ambSp$.
We work within the metric space
$(\measurable(\ambSp),d_1)$ of equivalence classes of
Borel measurable subsets of $\ambSp$ with finite $\Haus^d$ measure, endowed with the $\Haus^d$-metric. The space $\ambSp$ is equipped with its usual metric.

\vspace{0.5em}\noindent\textbf{Spherical nonincreasing rearrangement.}\ 
The simplest symmetrization transform sets into balls.
\begin{definition}
Let $A\in\mathscr{B}(\ambSp)$ be a Borel measurable set.
The spherical nonincreasing rearrangement of $A$ is the open ball
$A^\star$ centered on the origin $0$, that satisfies $\Haus^d(A^\star) = \Haus^d(A)$.
\end{definition}
The induced map on the quotient space $\measurable(\ambSp)$ is denoted by $\star$. The
fact that $\star$ is an involution is direct. For the nonexpansiveness, one can simply observe
that since $\star$ is measure-preserving and monotone, we directly have for all $A,B\in\mathscr{B}(\ambSp)$
	\begin{align*}
		\Haus^d(A^\star\Delta B^\star)
			&= \Haus^d(A^\star\setminus B^\star) + \Haus^d(B^\star\setminus A^\star) \\
			&\leq \Haus^d(A^\star\setminus (B\cap A)^\star) + \Haus^d(B^\star\setminus (A\cap B)^\star) \\
			&= \Haus^d(A^\star) + \Haus^d(B^\star) - 2\Haus^d((B\cap A)^\star) \\
			&= \Haus^d(A) + \Haus^d(B) - 2\Haus^d(B\cap A) = \Haus^d(A\Delta B) .
	\end{align*}

\vspace{0.5em}\noindent\textbf{Steiner symmetrizations.}\ 
Let be $u\in\Proj$ and $\langle u\rangle$ be its linear span. We write $u^\perp$ the orthogonal complement subspace in $\ambSp$. For every $x\in u^\perp$, we write the section
	\[ A\Sect_x = A\cap (x+\langle u\rangle).\]
\begin{definition}
Given $u\in\Proj$ and a Borel measurable set
$A\in\mathscr{B}(\ambSp)$,
the Steiner symmetrization of $A$ with respect
to $u$ is the unique $A^u\in\mathscr{B}(\ambSp)$ such that
	\begin{itemize}
		\item for every $x\in u^\perp$, $A^u\Sect_x$
		is an open ball of $(x+\langle u\rangle)$,
		\item for every $x\in u^\perp$, $A^u\Sect_x$ is centered on $0$,
		\item for every $x\in u^\perp$,
			$\Haus^1(A^u\Sect_x) = \Haus^1(A\Sect_x)$.
	\end{itemize}
\end{definition}
The induced maps on the quotient $\measurable(\ambSp)$ are the usual Steiner symmetrizations.
The set $\SteinerSymm$ of Steiner symmetrizations being in one-to-one
correspondence with $\Proj$, we equip it with the induced topology of $\Proj$, so that
$\SteinerSymm$ is a metrizable and compact topological space with countable basis.

\vspace{0.5em}\noindent\textbf{Polarizations.}\ 
Fix $\hat{e}\in\sphere$ and $r\geq 0$. The corresponding
affine half-subspace is defined by
	\[ H^{\hat{e},r} = \{ x\in\ambSp : x\cdot \hat{e} \leq r \} .\]
There is a unique nontrivial reflection $\sigma^{\hat{e},r}$ of $\ambSp$ that leaves the boundary of
$H^{\hat{e},r}$ invariant.
\begin{definition}
Given $\hat{e}\in\sphere$, $r\geq 0$ and a Borel measurable set $A\in\mathscr{B}(\ambSp)$, the polarization
of $A$ is defined as the unique $A^{\hat{e},r}\in\mathscr{B}(\ambSp)$ satisfying the following axioms:
	\begin{itemize}
		\item if $x\in H^{\hat{e},r}$, then $x\in A^{\hat{e},r}$ if, and only if,
		$x\in H^{\hat{e},r}\cup\sigma^{\hat{e},r}(H^{\hat{e},r})$,
		\item if $x\notin H^{\hat{e},r}$, then $x\in A^{\hat{e},r}$ if, and only if,
		$x\in H^{\hat{e},r}\cap\sigma^{\hat{e},r}(H^{\hat{e},r})$.
	\end{itemize}
\end{definition}
The induced maps on the quotient $\measurable(\ambSp)$ are the usual polarizations.
The set $\PolarSymm$ of polarizations is in one-to-one
correspondence with $\sphere\times[0,+\infty)$. We equip it with the induced topology of $\sphere\times[0,+\infty)$, so that
$\PolarSymm$ is a metrizable and locally compact topological space with countable basis.
Through this identification,
we define the closed subset
	\[ \boundary{\PolarSymm} = \sphere\times\{0\} .\]
It corresponds to affine half-subspaces that contains $0$ in their usual boundary.

\vspace{0.5em}\noindent\textbf{Cap symmetrizations.}\ 
Fix $\hat{e}\in\sphere$ and $r\geq 0$.
For $t\geq 0$, define the spherical section of a set $A\subseteq\ambSp$ as
	\[ A\Sect_t = A \cap \partial B(r\hat{e} , t) .\]
\begin{definition}
Given $r\geq 0$ and a Borel measurable set
$A\in\mathscr{B}(\ambSp)$, the cap symmetrization of $A$ with respect
to $(\hat{e},r)$ is the unique $A^{\hat{e},r}\in\mathscr{B}(\ambSp)$ that satisfies the following axiom:
	\begin{itemize}
		\item for every $t\geq 0$, $A^{\hat{e},r}\Sect_t$
		is an open ball of $\partial B(r\hat{e}, t)$,
		\item for every $x\in u^\perp$, $A^{\hat{e},r}\Sect_t$ is centered on $(r-t)\hat{e}$
		\item for every $t\geq 0$,
			$\Haus^{d-1}(A^{\hat{e},r}\Sect_t) = \Haus^{d-1}(A\Sect_t)$.
	\end{itemize}
\end{definition}
The induced maps on the quotient $\measurable(\ambSp)$
form the usual set of cap symmetrizations.
The set $\CapSymm$ of cap symmetrizations being in one-to-one
correspondence with $\sphere\times[0,+\infty)$, we equip it with the induced topology of $\sphere\times[0,+\infty)$, so that
$\CapSymm$ is a metrizable and locally compact topological space with countable basis.
We define the closed subset
	\[ \boundary{\CapSymm} = \sphere\times\{0\} .\]
It corresponds to cap symmetrizations with respect to half-lines whose initial points
are the origin $0$.

\vspace{0.5em}\noindent\textbf{Common properties of the examples of symmetrizations.}\ 
Steiner symmetrizations, cap symmetrizations and polarizations enjoy important
common properties, which we recall in the next proposition. The following result
being classical in the field of symmetrizations, we omit the proof.
\begin{proposition}\label{propertiesOfSymm}
The sets $\SteinerSymm,\PolarSymm$ and $\CapSymm$ acting on $(\measurable(\ambSp),d_1)$
are $\star$-compatible symmetrization spaces and for every $X\in\measurable(\ambSp)$,
the sets
	\begin{align*}
		&X^\SteinerSymm = \{ X^{s_1\dots s_n} : n\in\integers,
		s_1,\dots,s_n\in\SteinerSymm \} ,\\
	&\quad X^\PolarSymm = \{ X^{s_1\dots s_n} : n\in\integers,
		s_1,\dots,s_n\in\PolarSymm \} ,\\
	&\quad\quad X^\CapSymm = \{ X^{s_1\dots s_n} : n\in\integers,
		s_1,\dots,s_n\in\CapSymm \} 
	\end{align*}
have compact closure in $(\measurable(\ambSp),d_1)$. The function
	\[ \inertie : \measurable(\ambSp) \to \reals^+ :
		\inertie(X) = \int\limits_X\frac{|x|^2}{1+|x|^2}\dif x \]
is a $\star$-compatible asymmetry function on $\SteinerSymm$, on $\PolarSymm$ and on $\CapSymm$.
Moreover, $\inertie$ is a strict asymmetry function on $\SteinerSymm$, on $\PolarSymm\setminus\boundary{\PolarSymm}$
and on $\CapSymm\setminus\boundary{\CapSymm}$.
\end{proposition}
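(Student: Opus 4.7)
I would split the proposition into its four constituent claims and treat each uniformly across the three cases $\SteinerSymm$, $\PolarSymm$, $\CapSymm$, exploiting the fact that all three symmetrizations are ``fibred'' constructions. Idempotence is immediate from the definitions. For nonexpansiveness, Fubini reduces $d_1(X^s, Y^s) \leq d_1(X, Y)$ to a one-dimensional assertion on each fibre: a parallel line $x + \langle u \rangle$ for Steiner, a reflection pair $\{x, \sigma^{\hat{e},r}(x)\}$ for polarization, and a sphere $\partial B(r\hat{e}, t)$ for cap symmetrization. On each fibre the operation reduces to a monotone one-dimensional rearrangement, which is a standard 1D contraction. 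Continuity in $s$ can be checked first on finite unions of axis-aligned rectangles (where the explicit formula is manifestly continuous) and extended by density using nonexpansiveness; then continuity in $X$ upgrades to joint continuity. For $\star$-compatibility one observes that a centred Euclidean ball is fixed by every element of each of the three spaces (so $X^{\star s} = X^\star$), while $X^{s\star} = X^\star$ follows from measure preservation; the converse direction --- a set invariant under all symmetrizations in the family is necessarily $X^\star$ --- is classical for Steiner (Gross), and the standard analogues apply for polarizations and cap symmetrizations.

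For the relative compactness of $X^\SymmSpace$ in $(\measurable(\ambSp), d_1)$, I would use that each of these symmetrizations decreases the integral $\int_Y \phi(|x|)\dif x$ for every increasing $\phi$, and in particular preserves $\Haus^d(Y) = \Haus^d(X)$ while decreasing $\inertie$. The uniform bound $\int_Y |x|^2/(1+|x|^2)\dif x \leq \inertie(X)$ combined with fixed measure gives tightness: mass cannot escape to infinity. Standard equi-integrability then yields precompactness of $\{\chi_Y : Y \in X^\SymmSpace\}$ in $L^1(\ambSp)$, which is precisely relative compactness in $(\measurable(\ambSp), d_1)$. For $\inertie$ itself, continuity follows from $|\inertie(X) - \inertie(Y)| \leq \Haus^d(X \Delta Y)$, since the integrand is bounded by $1$; monotonicity $\inertie(X^s) \leq \inertie(X)$ is the rearrangement inequality applied fibre by fibre to $\chi_X$ against the strictly increasing weight $w(x) = |x|^2/(1 + |x|^2)$; and $\star$-compatibility reflects that $X^\star$ is the unique minimizer of $\int \chi_X w$ among measure-preserving rearrangements of $X$.

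The main obstacle is the strict asymmetry clause, where one must track equality cases carefully and see precisely where the boundary sets $\boundary{\PolarSymm}$ and $\boundary{\CapSymm}$ intervene. I would argue that equality $\inertie(X^s) = \inertie(X)$ implies equality in the one-dimensional rearrangement inequality on almost every fibre. For Steiner symmetrizations the restriction of $w$ to each line parallel to $u$ is strictly convex with a unique minimum at the foot of the perpendicular from $0$; this strict monotonicity on each half-line forces each section $X \Sect_x$ to coincide almost everywhere with the centred symmetric interval $X^u \Sect_x$, hence $X = X^u$. For polarizations $(\hat{e}, r)$ with $r > 0$, the hyperplane $\partial H^{\hat{e},r}$ misses the origin, so $|x| < |\sigma^{\hat{e},r}(x)|$ strictly for $x$ in the interior of $H^{\hat{e},r}$; this strict inequality of the weight on each reflection pair forces $X = X^{\hat{e},r}$. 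For cap symmetrizations with $r > 0$ the weight restricted to each sphere $\partial B(r\hat{e}, t)$ is strictly monotone in the geodesic distance to the pole $(r-t)\hat{e}$, again forcing equality of each spherical slice. The point where strictness fails at the boundary $r = 0$ is that the relevant reflections and spherical rotations then preserve $|x|$, so $w$ is invariant under the symmetrization along each fibre and $\inertie$ cannot detect the rearrangement --- explaining why $\boundary{\PolarSymm}$ and $\boundary{\CapSymm}$ must be excluded.
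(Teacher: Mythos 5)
The structure of your argument is sound for most of the proposition, and the ``fibred'' viewpoint you adopt is a legitimate alternative to what the paper does (the paper instead establishes everything for polarizations first, citing Brock--Solynin and the Burchard--Fortier polarization identity, and then extends to Steiner and cap symmetrizations by the approximation argument of van~Schaftingen's \emph{Explicit approximation of the symmetric rearrangement by polarizations}). Your treatments of idempotence, nonexpansiveness by Fubini, $\star$-compatibility, monotonicity of $\inertie$, and the equality-case analysis for strictness (including why $\boundary{\PolarSymm}$ and $\boundary{\CapSymm}$ must be removed) are all essentially correct; the phrase ``strictly convex'' for $w$ along a line is inaccurate (the restriction $t\mapsto 1-(1+|x_0|^2+t^2)^{-1}$ is not convex for large $t$), but the property you actually invoke --- strict monotonicity on each half-line away from the minimizer --- is what matters and is what you use.

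The genuine gap is in the compactness paragraph. You write that ``standard equi-integrability then yields precompactness of $\{\chi_Y : Y\in X^\SymmSpace\}$ in $L^1(\ambSp)$.'' This is false: equi-integrability together with tightness gives only \emph{weak} $L^1$ relative compactness by Dunford--Pettis, not relative compactness in the $L^1$ norm, and $d_1$ is precisely the $L^1$ distance between characteristic functions. The Kolmogorov--Riesz theorem, which the paper explicitly invokes, requires a third condition: uniform continuity of translations, $\sup_{Y\in X^\SymmSpace}\|\chi_Y(\cdot+h)-\chi_Y\|_{L^1}\to 0$ as $h\to 0$. Your argument says nothing about this, and it is the substantive part of the compactness claim. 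Supplying it requires something concrete, for example, proving that each of the three symmetrizations does not increase perimeter and using the elementary bound $\|\tau_h\chi_Y-\chi_Y\|_{L^1}\le|h|\,\mathrm{Per}(Y)$ on the dense class of finite-perimeter $X$, then extending by nonexpansiveness; or, as the paper does, proving the equicontinuity estimate for polarizations via the polarization identity and deducing the Steiner and cap cases from polarization approximation. A secondary slip in the same paragraph: the bound $\int_Y w\le\inertie(X)$ for $w(x)=|x|^2/(1+|x|^2)$ does \emph{not} by itself give tightness, since $w$ is bounded by $1$; it only yields $\Haus^d(Y\cap\{|x|>R\})\le\inertie(X)(1+R^{-2})$, which does not vanish as $R\to\infty$. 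The correct route to tightness is the one you state first but then abandon: monotonicity of $\int_Y\phi(|x|)\dif x$ under symmetrization applied to $\phi=\chi_{[R,\infty)}$ gives $\Haus^d(Y\cap\{|x|>R\})\le\Haus^d(X\cap\{|x|>R\})\to 0$.
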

The previous proposition is straightforward for polarizations~\citelist{\cite{BrockSolynin}\cite{BurchardFortier}*{Polarization identity}}.
The compactness property follows from the Kolmogorov-Riesz compactness theorem.
Once one has the result for polarizations, one can extend it to Steiner and cap symmetrizations by an approximation
argument \cite{vanSchaftExplicit}.

\subsection{Markov Steiner symmetrizations}

In view of the discussion above, we can prove directly \cref{mainTheoremSS}.

\begin{proof}[Proof of \cref{mainTheoremSS}]
The function
	\[ \inertie : \measurable(\ambSp)\to\reals^+:
		\inertie(X) = \int\limits_X\frac{|x|^2}{1+|x|^2}\,dx \]
is a strict asymmetry function on $\SteinerSymm$, according to \cref{propertiesOfSymm}.
The fact that the continuity condition
\ref{mainTheoremSS:Continuity} in \cref{mainTheoremSS} is equivalent to the continuity
condition \ref{continuity.proposition.markov} of \cref{proposition.markov} is a
consequence of Urysohn's lemma. We can apply \cref{proposition.markov} to get the desired
result.
\end{proof}
\begin{example}[Random walk in $\Proj$]
We fix $r>0$ and we write $\sigma$ for the
Haar measure on $\Proj$. The map $[e\in\Proj\mapsto\sigma(B(e,r))]$
is constant. We fix $e\in\Proj$ and we define the transition function
	\[ P: \Proj \times\mathscr{B}(\Proj) \to [0,1] : P_s(A)=\int\limits_{B(s,r)}\chi_A(x)\,\frac{\sigma(\dif x)}{\sigma(B(e,r))} .\]
The transition function $P$ is strong Feller continuous everywhere on $\Proj$.
According to \cref{proposition.strongFeller}, the family $\{P^n:n\in\integers\}$
enjoys the usual continuity assumption at every point.
We also have, for every nonempty open set $\mathcal{O}\subseteq\Proj$ and for every $s\in\Proj$,
	\[ \sum_{n\in\integers}P^n_s(\,(\Proj)^{n-1}\times\mathcal{O}\,) > 0 .\]
This last inequality can be proven by noting that,
for every $n\in\integers$, for every $s\in\Proj$,
the probability measure
	\[ H^n_s : \mathscr{B}(\Proj) \to [0,1] : H^n_s(A) = P^n_s(\,(\Proj)^{n-1}\times A\,) ,\]
has $\overline{B(s,nr)}$ as support.
Since $\Proj$ is compact, any Markov process on $\Proj$ is nonevanescent.
\Cref{corollary.markov.quick} ensures that any time-homogeneous
Markov process $(S_n)_{n\in\integers}$ with transition function $P$
satisfies that for every $X\in\measurable(\ambSp)$, the sequence $(X^{S_1\dots S_n})_{n\in\integers}$ converges almost-surely to $X^\star$.
\end{example}
\begin{proposition}[Deterministic Steiner symmetrizations]
Let $\phi:\Proj\to\Proj$ be a continuous map and $s\in\Proj$ such that
	\[ \overline{\{\phi^n(s):n\in\integers\}}= \Proj .\]
For every $X\in\measurable(\ambSp)$, the sequence $(X^{\phi(s)\dots\phi^n(s)})_{n\in\integers}$ converges
in measure to $X^\star$.
\end{proposition}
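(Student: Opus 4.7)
The plan is to interpret the deterministic orbit $(\phi^n(s))_{n\in\integers}$ as the realisation of a time-homogeneous Markov process and then invoke \cref{mainTheoremSS} at $\limitPoint = s$. Define the transition function $P_u(A) = \delta_{\phi(u)}(A)$, so that the process $(S_n)_{n\in\integers}$ with initial distribution $\delta_s$ satisfies $S_n = \phi^n(s)$ and $P^n_u$ is the Dirac mass at the tuple $(\phi(u),\phi^2(u),\dots,\phi^n(u))$. I then check each of the three hypotheses of \cref{mainTheoremSS}.

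The continuity condition~\ref{mainTheoremSS:Continuity} is essentially free: for any open set $\mathcal{O}\subseteq(\Proj)^n$, the map $u\mapsto P^n_u(\mathcal{O}) = \chi_\mathcal{O}(\phi(u),\dots,\phi^n(u))$ is lower semi-continuous because $\chi_\mathcal{O}$ is lower semi-continuous and $\phi$ is continuous. For the discrimination condition~\ref{mainTheoremSS:Discrimination}, fix $X$ with $m(X\Delta X^\star) > 0$ and consider the set $\mathcal{U}_X = \{u\in\Proj : m(X\Delta X^u) > 0\}$. This set is open by the joint continuity of the Steiner action recorded in \cref{propertiesOfSymm}, and it is nonempty because if $X = X^u$ for every $u\in\SteinerSymm$, the $\star$-compatibility of $\SteinerSymm$ forces $X = X^\star$. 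Density of the orbit $\{\phi^n(s):n\in\integers\}$ then furnishes some $n$ with $\phi^n(s)\in\mathcal{U}_X$, which gives $P^n_s((\Proj)^{n-1}\times\mathcal{U}_X) = 1 > 0$.

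The main obstacle is the recurrence condition~\ref{mainTheoremSS:Recurrence}, because density merely produces \emph{at least one} orbit point in each neighbourhood of $s$, whereas recurrence asks for infinitely many. I argue by contradiction: suppose some open neighbourhood $\mathcal{O}$ of $s$ contains only finitely many orbit points. Shrink it to a smaller open neighbourhood $\mathcal{O}'\subseteq\mathcal{O}$ of $s$ that excludes every such orbit point distinct from $s$. Since $s$ still lies in the closure of the orbit, the only orbit points it can approximate inside $\mathcal{O}'$ are copies of $s$ itself, so $s = \phi^k(s)$ for some $k\geq 1$. But then the whole orbit is contained in the finite periodic set $\{s,\phi(s),\dots,\phi^{k-1}(s)\}$, contradicting its density in the uncountable space $\Proj$ (for $d\geq 2$; the case $d=1$ is trivial). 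Hence every neighbourhood of $s$ is entered infinitely often.

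With the three hypotheses verified, \cref{mainTheoremSS} yields almost-sure convergence in measure of $(X^{S_1\cdots S_n})_{n\in\integers}$ to $X^\star$. Because the process is deterministic, ``almost-surely'' means ``surely'', and this is exactly the convergence of $(X^{\phi(s)\cdots\phi^n(s)})_{n\in\integers}$ to $X^\star$ in measure asserted by the proposition.
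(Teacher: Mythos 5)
Your proposal is correct and follows the paper's approach: both encode the deterministic orbit as a time-homogeneous Markov process with Dirac transition kernel and verify the recurrence, continuity, and discrimination hypotheses. The paper invokes its abstract \cref{proposition.markov} while you invoke \cref{mainTheoremSS}, but these coincide for Steiner symmetrizations. You correctly identify and fill the one place where the paper's proof is terse, namely that density of the orbit yields \emph{infinitely many} returns to each neighbourhood of $s$, not just one. Your periodicity-based contradiction is valid, though a slightly shorter route exists: if the orbit met some neighbourhood $\mathcal{O}$ of $s$ only finitely often, then the tail of the sequence would lie in the closed set $\Proj\setminus\mathcal{O}$, forcing $\overline{\{\phi^n(s):n\in\integers\}}\subseteq(\Proj\setminus\mathcal{O})\cup F$ for some finite $F$, which cannot exhaust $\Proj$ since the nonempty open set $\mathcal{O}$ is uncountable when $d\geq 2$.
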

\begin{proof}
Let us define for all $n\in\integers$, $S_n = \phi^n(S)$.
The sequence $(S_n)_{n\in\integers}$ is a time-homogeneous Markov process
on $\Proj$. The iterated kernels are given for every $n\in\integers$ by the formula
	\[ P^n_sf=f(\,\phi(s),\dots,\phi^n(s)\,) .\]
The recurrence condition \ref{recurrence.proposition.markov}
and the discrimination condition \ref{discrimination.proposition.markov}
of \cref{proposition.markov}
both follow from the assumption that the orbit of $s$ under $\phi$ is dense in $\Proj$.
Since $\phi$ is continuous,
the continuity condition \ref{continuity.proposition.markov} is satisfied
whenever $f$ is continuous and bounded.
We can thus apply \cref{proposition.markov} with limit
point $\limitPoint=s$. The proof is done.
\end{proof}
The previous proposition can be thought of as a generalization of~\cite{Shape}*{Theorem~5.1},
where the authors studied Kronecker sequence of deterministic Steiner symmetrizations
of the form $(\,(\cos(n\alpha),\sin(n\alpha))\,)_{n\in\integers}$,
with $\alpha/\pi$ irrational. Their analysis is based on the convergence in shape.
Our result can be applied for other deterministic sequence,
and has straightforward generalizations for cap symmetrizations and polarizations.
\begin{counterexample}
The continuity condition \ref{mainTheoremSS:Continuity}
in \cref{mainTheoremSS} is necessary.
Let us consider a sequence $(\alpha_n)_{n\in\integers}$ in $\Proj$ such that
$\alpha_i\neq\alpha_j$ for $i\neq j$, and
such that the set $\{\alpha_n:n\in\integers\}$ is dense in $\Proj$.
We define the transition function on $\Proj$ through
	\[ P_{s} = \left\{ \begin{array}{ll}
		\delta_{\alpha_{n+1}}
			&\quad\textrm{if}\ s=\alpha_n\ \textrm{for some}\ n\in\integers,\\
		\delta_{\alpha_1}
			&\quad\textrm{otherwise}
	\end{array} \right.. \]
If $(S_n)_{n\in\integers}$ is a time-homogeneous Markov process associated with the transition function $P$, then almost-surely
$S_n=\alpha_n$ for every $n\in\integers$. Hence, it is a straightforward computation to check that the hypothesis of \cref{mainTheoremSS}
are satisfied with $\limitPoint=\alpha_1$, except that we miss the continuity property.
Actually, in general, there exists $X\in\measurable(\ambSp)$ such that $(X^{S_1\dots S_n})_{n\in\integers}$ fails to convergence to $X^\star$ almost-surely~\citelist{\cite{Klain}\cite{Weth}}.

In this example, note that one could endow $\Proj$ with the
discrete topology. The continuity assumption is then trivial,
but the recurrence condition forces the process to
have a finite cycle. This is the situation of
iterated Steiner symmetrizations using a finite number of directions~\cite{Klainfini}.
\end{counterexample}

\subsection{Markov cap symmetrizations and polarizations on $(0,+\infty)$}

\begin{proof}[Proof of \cref{mainTheoremCap}]
Since we identify $\CapSymm$ and $\PolarSymm$ with $\sphere\times[0,+\infty)$,
let us recall that $\boundary{\CapSymm}=\sphere\times\{0\}=\boundary{\PolarSymm}$.
We consider the usual asymmetry function
	\[ \inertie : \measurable(\ambSp)\to\reals^+:
		\inertie(X) = \int\limits_X\frac{|x|^2}{1+|x|^2}\,dx .\]
It is a $\star$-compatible asymmetry function on $\CapSymm$ and $\PolarSymm$ which is
strict on the subsets $\CapSymm\setminus\boundary{\CapSymm}$ and $\PolarSymm\setminus\boundary{\PolarSymm}$ (\cref{propertiesOfSymm}).
In these settings, the result follows from \cref{proposition.markov}.
\end{proof}
For polarizations, the following
elementary result shows that the nonevanescence assumption in
\cref{corollary.markov.quick} is actually required for the sequence to
approximate the spherical nonincreasing rearrangement.
This property fails in general for cap symmetrizations.
\begin{proposition}
Let $(s_n)_{n\in\integers}$ be a sequence of polarizations
written for every $n\in\integers$ as $s_n=(\hat{e}_n,r_n)$ with $e_n\in\sphere$
and $r_n\in [0,+\infty)$. If
	$\liminf_{n\to+\infty}r_n>0$,
then there exists $X\in\measurable(\ambSp)$ of finite measure such that
the sequence $(X^{s_1\dots s_n})_{n\in\integers}$ does not converge.
\end{proposition}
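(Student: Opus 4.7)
The strategy is to take $X$ to be a small ball $B(p_0,\rho)$ centered at a point $p_0\neq 0$ inside a small neighborhood of the origin, and arrange parameters so that each polarization $s_n$ acts on the current iterate either as the identity or as a rigid reflection across a hyperplane through the origin. Since such reflections preserve $|p_0|$, the centers of all iterates stay on the sphere $\partial B(0,|p_0|)$ and hence at positive distance from $0$, while $X^\star = B(0,\rho)$ is centered at the origin. In fact, the iterates become constant after finitely many steps, stabilizing at a ball different from $X^\star$, and therefore cannot converge to $X^\star$.

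From $\liminf_{n\to+\infty}r_n>0$ I would extract $\delta>0$ and $N\in\integers$ with $r_n\geq\delta$ for every $n\geq N$, and set
\[
\delta'=\min\bigl(\{\delta\}\cup\{r_n:n<N,\,r_n>0\}\bigr)>0,
\]
the minimum being over a finite set of strictly positive numbers. I would then fix $\rho\in(0,\delta'/2)$ (small enough, to be made precise below) and choose $p_0\in\partial B(0,\delta'/2)$ in \emph{generic position}: for every index $n<N$ with $r_n=0$, the ball $B(p_{n-1},\rho)$ should not straddle the hyperplane $\{x\cdot\hat{e}_n=0\}$, where the centers $p_0,p_1,\dots,p_{N-1}$ are defined inductively by the polarization rule described below.

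The induction is then straightforward. If $r_n>0$ then $r_n\geq\delta'$, so $B(p_{n-1},\rho)\subseteq B(0,\delta')\subseteq\{x\cdot\hat{e}_n\leq r_n\}$, and $s_n$ fixes the ball: $p_n=p_{n-1}$. If $r_n=0$, which can only occur for $n<N$, the non-straddling condition places $B(p_{n-1},\rho)$ entirely in one of the half-spaces bounded by $\{x\cdot\hat{e}_n=0\}$; $s_n$ then either leaves it unchanged or sends it to $B(\sigma_n(p_{n-1}),\rho)$, where $\sigma_n$ is the orthogonal reflection through the origin with normal $\hat{e}_n$. In both subcases $|p_n|=|p_{n-1}|=\delta'/2$. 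Combining these two regimes, $X^{s_1\dots s_n}=B(p_n,\rho)$ for every $n$, with $p_n=p_{N-1}$ for every $n\geq N$. The iterates thus stabilize at $B(p_{N-1},\rho)$, and since $|p_{N-1}|=\delta'/2>0$ this limit is distinct from $X^\star$, proving that $(X^{s_1\dots s_n})$ does not converge to the spherical nonincreasing rearrangement $X^\star$.

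The main (and only) subtle step is the generic-position choice of $p_0$. Because $p_{n-1}$ depends on $p_0$ piecewise as a composition of orthogonal reflections through the origin, the straddling condition $|p_{n-1}\cdot\hat{e}_n|<\rho$ translates, after intersecting with $\partial B(0,\delta'/2)$, into a spherical band of $(d-1)$-dimensional measure $O(\rho)$ in a direction depending on the preceding reflections. Summing over the finitely many bad indices and sign patterns, the forbidden set on $\partial B(0,\delta'/2)$ has total measure $O(\rho)$; taking $\rho$ small enough leaves a positive-measure complement from which a suitable $p_0$ can be selected.
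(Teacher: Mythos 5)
Your construction is correct, but it is substantially more involved than the paper's. The paper takes $X$ to be the radially symmetric annulus $B(0,r)\setminus B(0,r/2)$ with $r=\min\bigl\{\delta,\min\{r_i:i<N,\,r_i>0\}\bigr\}$. Radial symmetry makes $X$ invariant under every polarization of the form $(\hat{e},0)$, and the inclusion $X\subseteq B(0,r)\subseteq H^{\hat{e}_n,r_n}$ handles every $n$ with $r_n\geq r$, so the entire sequence $(X^{s_1\dots s_n})$ is constant and equal to $X\neq X^\star$, with no case analysis on orbits and no genericity argument at all. Your off-center ball $B(p_0,\rho)$ lacks that built-in invariance: at each $r_n=0$ step the ball can be reflected, and you must guarantee that it never straddles $\{x\cdot\hat{e}_n=0\}$ along the orbit. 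Your measure-theoretic fix is sound — after the finitely many bad indices $n<N$ the orbit map $p_0\mapsto p_{n-1}$ is piecewise an orthogonal transformation, so the forbidden set is a union of at most $2^{N-1}$ slabs meeting $\partial B(0,\delta'/2)$ in bands of measure $O(\rho)$, and taking $\rho$ small leaves room for a good $p_0$ — but it is a genuine extra step that the annulus eliminates. The moral is that radial symmetry is exactly the invariant you want against hyperplanes through the origin, so building it into $X$ is the natural move. Finally, note that both arguments produce a sequence that \emph{does} converge, just not to $X^\star$; the proposition's phrase "does not converge" is best read in context as "does not converge to $X^\star$," which is what both proofs actually establish.
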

\begin{proof}
The condition $\liminf_{n\to+\infty}r_n>0$ shows that there exists $N\in\integers$ such
that, for every $n\geq N$, we have $r_n\geq \delta$, for some $\delta>0$.
We define \[r=\min\{\delta,\min\{r_i:i\in\{1,\dots,N-1\},r_i>0\}\},\]
and $X\in\measurable(\ambSp)$ the
(equivalent class of the) annulus
	\[ X = B(0,r)\setminus B(0,r/2) .\]
Then clearly $X\neq X^\star$. For $n\in\integers$, two cases may occur.
If $r_n\geq r$, then $X$ is contained in the half-space $(\hat{e}_n,r_n)$, and
thus $X=X^{(\hat{e}_n,r_n)}$. If $r_n<r$, then we must have by construction $r_n=0$.
Since $X$ is radially symmetric, we have $X=X^{(\hat{e}_n,0)}$, so that $X$ remains
fixed by all polarizations of the set $\{s_n:n\in\integers\}$.
\end{proof}
\begin{example}[Brownian motion on $(0,+\infty)$]
Let $(Z_n)_{n\in\integers}$ be a sequence of
independent and identically distributed variables on $(0,+\infty)$
with density function $\rho$, and define for every $n\in\integers$
	\[ \left\{
		\begin{array}{ll}
			W_1 &= Z_1, \\
			W_{n+1} &= Z_{n+1}\cdot W_n .
		\end{array}
	\right. \]
Let $(U_n)_{n\in\integers}$ be a sequence of independent and identically distributed
variables on $\sphere$ with distribution $\mu$.
We assume that $\support(\mu)\times\support(\rho)=\sphere\times(0,+\infty)$ and
	\[ \int\limits_{(0,+\infty)}s\rho(s)\dif s = 1 .\]
The transition function
of the time-homogeneous Markov process $(W_n)_{n\in\integers}$ is given by
	\[ P : \mathscr{B}(\,(0,+\infty)\,)\times(0,+\infty)\to [0,1]:
		P_x(A) = \int\limits_{(0,+\infty)}\chi_A(sx)\rho(s)\dif s .\]
For every continuous and bounded function $f:(0,+\infty)\to\reals$, we
have
	\[ P_xf = \int\limits_{(0,+\infty)}f(xs)\rho(s)\dif s ,\]
which is continuous with respect to the parameter $x\in (0,+\infty)$.
By assumption on the support of $\rho$, every nonempty open set of
$(0,+\infty)$ is reachable with positive probability from every point
$x\in (0,+\infty)$, in one step. We now study the nonevanescence
of $(W_n)_{n\in\integers}$. It remains to prove that the process
$((U_n,W_n))_{n\in\integers}$ is
nonevanescent. Since $\sphere$ is compact, it suffices to shows that $(W_n)_{n\in\integers}$
is nonevanescent in $(0,+\infty)$.
We proceed by following the classical drift criterion~\cite{Tweedie}*{chapters~8,~9}.
By assumption, we have
	\[ \int\limits_{(0,+\infty)}s\,P_x(\dif s) =
		x\int\limits_{(0,+\infty)}s\rho(s)\dif s = x .\]
We write $E = \{ \exists K\subseteq\SymmSpace : K\ \textrm{compact}, (W_n)_{n\in\integers}
			\ \textrm{enters}\ K\ \textrm{infinitely many often} \}\in\mathcal{F}$.
Assume by contradiction
that $\mathbb{P}(E)<1$, and let $K\subseteq (0,+\infty)$ be a compact
set. Then there exists $k\in\integers$ such that
$0 < \mathbb{P}( \{\forall i>k:W_i\notin K\}\setminus E )$.
Denoting by $\mu$ the initial distribution of the process,
the distribution of $W_k$ is given by the measure
	\[ \mu P^k : \mathscr{B}(\SymmSpace) \to [0,1]:
		\mu P^k(A) = \int\limits_\SymmSpace P^k_s(\SymmSpace^{k-1}\times A)\,\mu(\dif s) .\]
Hence, the new process $(G_n)_{n\in\integers}$ defined for every $n\in\integers$ by
$G_n=W_{n+k-1}$ is a time-homogeneous Markov process with transition function $P$,
and initial distribution $\mu P^k$.
The random variable
	\[ T = \min\{ n\in\integers : G_n\in K \} ,\]
is adapted to the filtration $(\mathcal{F}_{n+k-1})_{n\in\integers}$ and
it satisfies $\{T=+\infty\} = \{\forall i\geq k:W_i\notin K\}$.
We use the martingale convergence theorem~\cite{Billingsley}*{Theorem~35.5} to show
that the set $(\{T=+\infty\}\setminus E)\in\mathcal{F}$
has null $\mathbb{P}$-measure~\cite{Tweedie}*{Proposition~9.4.1}.
To see this, observe that the stochastic process
$(M_n)_{n\in\integers}$ defined for every $n\in\integers$ by
	\[ M_n = G_n\chi_{\{T \geq n\}}, \]
is a positive martingale with respect to the filtration $(\mathcal{F}_{n+k-1})_{n\in\integers}$, by construction of $c$.
By the martingale convergence theorem, it converges $\mathbb{P}$-almost-surely
to some $\mathcal{F}$-measurable random variable $M_\infty$, which is $\mathbb{P}$-almost-surely finite. Therefore, we have $\mathbb{P}$-almost-surely
	\[ \chi_{\{T=+\infty\}}M_\infty = \chi_{\{T=+\infty\}} \lim_{n\to+\infty}W_{n+k-1} ,\]
so that $\{T=+\infty\}\setminus E$ has null $\mathbb{P}$-measure, which contradicts the
construction of $k\in\integers$. Therefore, the process $(W_n)_{n\in\integers}$ is
nonevanescent, and so is $((U_n,W_n))_{n\in\integers}$.
We deduce from \cref{corollary.markov.quick}
that for every $X\in\measurable(\ambSp)$, the sequence of successive cap symmetrizations
(resp.~polarizations) $(X^{(U_1,W_1)\dots (U_n,W_n)})_{n\in\integers}$ converges in measure to $X^\star$.
\end{example}

\subsection{Example for Markov cap symmetrizations on $[0,+\infty)$}\label{section.newSymmSpace}
The previous examples concern random walks on $\sphere$ or $\sphere\times(0,+\infty)$, viewed
as symmetrization spaces equipped with a strict asymmetry function. In this section,
we present a \emph{new example} of cap symmetrization space $\CapSymm^\sharp\subset\CapSymm$.
This new symmetrization space satisfies that the subset
	\[ \overline{ \CapSymm^\sharp \setminus \boundary{\CapSymm} }
		\Subset \CapSymm^\sharp \]
is \emph{not} a symmetrization space. In other words, our example occurs in a subset
$\CapSymm^\sharp$ where the nonstrict symmetrization of $\boundary{\CapSymm}$ are
needed for the convergence. To our knowledge,
such symmetrization spaces are unknown from the literature.
We will see that it is possible to construct
a continuous and nonevanescent time-homogeneous
Markov process in $\CapSymm^\sharp$ such that the universal convergence fails.
This will illustrate the necessity of the discrimination condition
\ref{discrimination.proposition.markov} in \cref{proposition.markov}.

As a preparation for the results of this paragraph, we first recall a
known model~\cite{Tweedie} of random walk in $[0,+\infty)$. Although the assumptions in the
following lemma can be weakened, we keep them as simple as possible to make the analysis
easy.
\begin{lemma}[Brownian motion on $[0,+\infty)$]\label{lemma.randomWalkTruncated}
Let $(Z_n)_{n\in\integers}$ be a sequence of
independent and identically distributed variables on $\reals$
with density function $\rho$, and define for every $n\in\integers$
	\[ \left\{
		\begin{array}{ll}
			W_1 &= Z_1, \\
			W_{n+1} &= \max\{W_n + Z_{n+1}, 0\} .
		\end{array}
	\right. \]
If $\support(\rho)=\reals$ and if there exists $\delta>0$ such that
	\[ \int\limits_{(0,+\infty)}s\rho(s)\dif s < 0 < \inf_{[-\delta,\delta]}\rho ,\]
then $(W_n)_{n\in\integers}$ is nonevanescent time-homogeneous Markov process with
strong Feller continuous transition function $P$ such that
for every $s\in [0,+\infty)$, for every nonempty open set $\mathcal{O}\subseteq [0,+\infty)$,
we have
	\[ \sum_{n\in\integers}P^n_s(\,(0,+\infty)^{n-1}\times\mathcal{O}\,) > 0 .\]
\end{lemma}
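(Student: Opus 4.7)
The plan is to verify four properties in turn: the Markov structure, strong Feller continuity, the summation (discrimination) condition, and the nonevanescence. The Markov property is immediate since $W_{n+1}$ depends on the past only through $W_n$ and the independent fresh variable $Z_{n+1}$. A direct change of variables yields the explicit transition kernel
\[
P_s(A) = \int_{A\cap(0,+\infty)} \rho(y-s)\,\dif y + \chi_A(0)\int_{-\infty}^{-s}\rho(z)\,\dif z,
\]
which decomposes into an absolutely continuous part on $(0,+\infty)$, with density $y\mapsto\rho(y-s)$ everywhere positive thanks to $\support(\rho)=\reals$, plus a possible atom at $0$.

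For strong Feller continuity, for a bounded Borel $f:[0,+\infty)\to\reals$ I would write $P_sf=\int_0^{+\infty} f(y)\rho(y-s)\,\dif y + f(0)\int_{-\infty}^{-s}\rho(z)\,\dif z$. The first term is continuous in $s$ by continuity of translation in $L^1(\reals)$, applied to $\rho$ and tested against $f\chi_{[0,+\infty)}$, while the second is continuous because $s\mapsto\int_{-\infty}^{-s}\rho$ is a continuous primitive. The discrimination condition then reduces to a single step: given a nonempty open $\mathcal{O}\subseteq[0,+\infty)$, pick a nontrivial interval $(a,b)\subset\mathcal{O}\cap(0,+\infty)$; for every $s\geq 0$ the full support of $\rho$ yields
\[
P_s(\mathcal{O})\geq\int_a^b\rho(y-s)\,\dif y>0,
\]
so the series in the statement is strictly positive already at $n=1$.

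The main work is the nonevanescence. I would invoke a Foster--Lyapunov drift criterion from \cite{Tweedie}*{chapters~8,~9} with the norm-like function $V(s)=s$. A direct computation gives
\[
\mathbb{E}\{W_{n+1}\mid W_n=s\}-s = \int_{-s}^{+\infty}(s+z)\rho(z)\,\dif z - s,
\]
which tends to $\int_{\reals}z\rho(z)\,\dif z$ as $s\to+\infty$ by dominated convergence, a limit the drift hypothesis forces to be strictly negative. Hence there exist $M>0$ and $\eta>0$ such that the one-step expected increment is bounded above by $-\eta$ outside the compact set $[0,M]$. The pointwise lower bound $\inf_{[-\delta,\delta]}\rho>0$ further provides a common minorization of $P_s$ on $[0,M]$ by a nontrivial absolutely continuous measure on $(0,+\infty)$, so that $[0,M]$ is a petite set in the sense of \cite{Tweedie}. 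Combining the drift inequality with this minorization yields Harris recurrence and, in particular, nonevanescence. The main obstacle I anticipate is precisely this last splicing: transporting the density bound on $[-\delta,\delta]$ into a uniform petite-set property over $[0,M]$ and then invoking the correct drift/petiteness theorem from \cite{Tweedie} to close the argument, while the Markov, strong Feller and discrimination parts are essentially bookkeeping off the explicit form of the kernel.
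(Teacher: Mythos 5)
Your proof is correct and follows the same overall blueprint as the paper: explicit transition kernel $P_s(A)=\Gamma(A\setminus\{0\}-s)+\Gamma((-\infty,-s])\,\delta_0(A)$, strong Feller continuity from $L^1$-continuity of translation plus the continuous primitive, and nonevanescence via a drift criterion drawn from \cite{Tweedie}. Two remarks on where you diverge, and one on scope.

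For the discrimination condition your argument is genuinely simpler than the paper's. You observe that $\support(\rho)=\reals$ makes the term $n=1$ already strictly positive: $P_s(\mathcal{O})\geq\int_a^b\rho(y-s)\,\dif y>0$ for any $(a,b)\subset\mathcal{O}\cap(0,+\infty)$. The paper instead reduces (via the bound $\rho\geq C\,\chi_{[-\delta,\delta]}$) to the uniform law on $[-\delta,\delta]$ and then computes the support of $H^n_s(A)=P^n_s\bigl((0,+\infty)^{n-1}\times A\bigr)$, which equals $[\max\{0,s-n\delta\},\,s+n\delta]$, so that the series eventually hits $\mathcal{O}$. Your one-step argument buys directness but relies on the global support hypothesis; the paper's multi-step argument would survive if that hypothesis were dropped and only the $[-\delta,\delta]$ lower bound retained. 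Both are valid under the stated hypotheses.

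For nonevanescence the paper simply cites \cite{Tweedie}*{Proposition~9.4.5, Theorem~9.4.1} and refers back to the martingale argument given for the Brownian motion on $(0,+\infty)$; your Foster--Lyapunov sketch with $V(s)=s$ is exactly the content of those references. Your drift computation is right: $\mathbb{E}\{W_{n+1}\mid W_n=s\}-s=-s\int_{-\infty}^{-s}\rho+\int_{-s}^{\infty}z\rho(z)\,\dif z\to\int_{\reals}z\rho(z)\,\dif z$ as $s\to+\infty$, provided $Z_1$ has an integrable first moment (which is implicit in the drift hypothesis; note that the displayed condition in the statement, $\int_{(0,+\infty)}s\rho(s)\,\dif s<0$, is plainly a misprint for $\int_{\reals}s\rho(s)\,\dif s<0$, and you read it correctly). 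You are also honest about the remaining bookkeeping --- transporting the pointwise bound $\inf_{[-\delta,\delta]}\rho>0$ into a minorization making $[0,M]$ petite, then invoking the drift/petite theorem. The paper leaves precisely this step to the reader as well, so your proposal matches the paper's level of detail there.
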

\begin{proof}
The process $(W_n)_{n\in\integers}$ is a Markov process, whose transition function is
given by
	\[ P : \mathscr{B}(\,[0,+\infty)\,)\times[0,+\infty) \to [0,1] :
		P_s(A) = \Gamma(A\setminus\{0\}-s) + \Gamma(\,(-\infty,-s]\,)\delta_0(A) .\]
Here $\Gamma$ stands for the Lebesgue measure on $\reals$ weighted by $\rho$.
The continuity of translations~\cite{Willem}*{Lemma~4.3.8}
and the dominated convergence theorem
that $P$ is strong Feller continuous. The nonevanscence of $(W_n)_{n\in\integers}$
is known~\cite{Tweedie}*{Proposition~9.4.5,~Theorem~9.4.1} and the proof
is similar to the proof given for the brownian motion on $(0,+\infty)$.
We omit the details.

Let us prove that
for every $s\in [0,+\infty)$, for every nonempty open set $\mathcal{O}\subseteq [0,+\infty)$,
we have
	\[ \sum_{n\in\integers}P^n_s(\,(0,+\infty)^{n-1}\times\mathcal{O}\,) > 0 .\]
First observe that it is sufficient to prove the claim for every nonempty open set of
$(0,+\infty)$. By assumption on $\rho$, there exists $C>0$ such that
for every Borel measurable set $A\subseteq [0,+\infty)$,
	\[ \int\limits_A\rho(x)\dif x \geq C\int\limits_A\chi_{[-\delta,\delta]}\dif x .\]
Therefore, by comparison of series, we can assume without loss of generality
that the distribution $\Gamma$ is the uniform
distribution on the interval $[-\delta,\delta]$. A straightforward computation then shows that
for every $s\in[0,+\infty)$ and for every $n\in\integers$, the measure
	\[ H^n_s : \mathscr{B}(\,(0,+\infty)\,) \to [0,1] : H^n_s(A)=P^n_s(\,(0,+\infty)^{n-1}\times A\,) \]
has support $[\max\{0,s-n\delta\},s+n\delta]$, which concludes the proof.
\end{proof}
\begin{definition}\label{defCapSymmSharp}
Fix $\hat{e}\in\sphere$. The truncated cap symmetrization
space $\CapSymm^\sharp\subseteq\CapSymm$ is defined from
$\{-1,1\}\times [0,+\infty)$ through the action
	\[ \left\{ \begin{array}{l}
		(1,r) \mapsto \textrm{the cap symmetrization}\ (\hat{e},r) \\
		(-1,r) \mapsto \textrm{the cap symmetrization}\ (-\hat{e},0) .
	\end{array} \right. \]
\end{definition}
Observe that $\CapSymm^\sharp$ posses two connected components,
one of them being $\{-1\}\times[0,+\infty)$ whose elements act the same way on
$\measurable(\ambSp)$.
\begin{proposition}
The symmetrization space $\CapSymm^\sharp$ acting
on $(\measurable(\ambSp),d)$ as in \cref{defCapSymmSharp}, is $\star$-compatible.
\end{proposition}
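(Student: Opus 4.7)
The plan is to verify the two axioms of $\star$-compatibility. The first axiom, that $X^{s\star} = X^\star = X^{\star s}$ for every $s \in \CapSymm^\sharp$ and every $X \in \measurable(\ambSp)$, is immediate from \cref{propertiesOfSymm}: by \cref{defCapSymmSharp} each element of $\CapSymm^\sharp$ acts on $\measurable(\ambSp)$ as a cap symmetrization in $\CapSymm$, and $\CapSymm$ itself is known to be $\star$-compatible. The real work lies in the second axiom: assuming $X = X^s$ for every $s \in \CapSymm^\sharp$, one must show that $X$ coincides modulo a null set with an open ball centered at the origin.

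For this, I would first use the two extremal symmetrizations $(1,0)$ and $(-1,0)$, which act respectively as the cap symmetrizations $(\hat{e}, 0)$ and $(-\hat{e}, 0)$, to establish radial symmetry of $X$. For each $t \geq 0$, the first invariance forces $X \cap \partial B(0,t)$ to be a spherical cap on $\partial B(0,t)$ centered at $-t\hat{e}$, while the second forces the same slice to be a spherical cap centered at the antipodal point $t\hat{e}$. A nontrivial spherical cap has a unique center, so the only way these two descriptions can coexist is if the slice is either empty or the entire sphere $\partial B(0,t)$. Consequently, there exists a measurable set $T \subseteq [0, +\infty)$ such that $X = \{x \in \ambSp : |x| \in T\}$ modulo a null set.

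Next I would exploit the symmetrizations $(1,r)$ with $r > 0$, namely the cap symmetrizations $(\hat{e}, r)$. Fix $r, t > 0$ and parametrize $\partial B(r\hat{e}, t) = \{r\hat{e} + tu : u \in \sphere\}$. Since
\[
|r\hat{e} + tu|^2 = r^2 + 2rt\,(u \cdot \hat{e}) + t^2
\]
is strictly increasing in $u \cdot \hat{e}$, radial symmetry implies that the slice $X \cap \partial B(r\hat{e},t)$ is of the form $\{u : u \cdot \hat{e} \in S(r,t)\}$ for some $S(r,t) \subseteq [-1,1]$. The cap symmetrization $(\hat{e}, r)$ replaces this slice by the cap centered at $(r-t)\hat{e}$, which in the $u$-parametrization is $\{u : u \cdot \hat{e} < s_0(r,t)\}$. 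Invariance under $(\hat{e},r)$ thus forces $S(r,t)$, and hence $T \cap [|r-t|, r+t]$, to be an initial segment of its containing interval modulo null sets. Letting $r = t$ range over $(0, +\infty)$ shows that $T$ itself is an initial segment of $[0,+\infty)$ up to a null set, and since $X$ has finite measure $T$ is essentially bounded, so $T = [0,R]$ modulo a null set for some $R \geq 0$. This identifies $X$ with the open ball $B(0,R) = X^\star$ in $\measurable(\ambSp)$.

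The main technical point is the translation in the final step: one must convert a spherical-cap invariance on each family of spheres $\partial B(r\hat{e},t)$ into a monotonicity property of the one-dimensional radial profile $T$, while being careful with the null-set tolerance inherent in the identification of subsets of $\ambSp$ up to Lebesgue-equivalence. The rest is formal and rests on the known $\star$-compatibility of $\CapSymm$ supplied by \cref{propertiesOfSymm}.
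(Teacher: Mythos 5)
Your proposal is correct and arrives at the same conclusion as the paper, but the argument is organized quite differently in the decisive step. Both proofs begin the same way: the two cap symmetrizations along $\hat{e}$ and $-\hat{e}$ with $r=0$ force each spherical slice $X\cap\partial B(0,t)$ to be simultaneously a cap centred at $-t\hat{e}$ and a cap centred at $t\hat{e}$, hence empty or the whole sphere, so $X$ is (up to a null set) a radially symmetric set $\{x:|x|\in T\}$. The paper implements this by passing to the concrete representative $B=A^{(-\hat{e},0)(\hat{e},0)}$, which avoids any explicit discussion of null sets on the radial coordinate.

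From there the routes diverge. The paper argues by contradiction: it locates radii $r_1<r_2$ with $r_1\hat{e}$ in the support of $B^\star\setminus B$ and $r_2\hat{e}$ in the support of $B\setminus B^\star$, and then shows that the single cap symmetrization $(\hat{e},\tfrac{r_1+r_2}{2})$ must move positive measure, because spheres centred at $\tfrac{r_1+r_2}{2}\hat{e}$ of radius near $\tfrac{r_2-r_1}{2}$ carry no $B$-mass near the cap centre $r_1\hat{e}$ but positive $B$-mass near the antipode $r_2\hat{e}$. You instead show directly, using the whole family $(\hat{e},r)$ with $r>0$, that the radial profile $T$ is an initial segment of $[0,+\infty)$, converting the cap-invariance on each sphere $\partial B(r\hat{e},t)$ into monotonicity of $T$ on the interval $[|r-t|,r+t]$ via the strictly monotone change of variables $u\cdot\hat{e}\mapsto|r\hat{e}+tu|$, and then letting $|r-t|\to 0$ and $r+t\to\infty$. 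The two arguments are dual: the paper detects a single violation of monotonicity and hits it with one well-chosen symmetrization, whereas you enforce local monotonicity everywhere and deduce the global ball shape. The paper's version is shorter and needs only support points; yours is more constructive and makes the ``shape'' of the invariant set explicit, at the cost of the extra bookkeeping you correctly flag — one must be careful that the cap condition holds only for a.e.\ $t$ for each $r$, and that the pushforward of $\Haus^{d-1}$ on the sphere to $[-1,1]$ is mutually absolutely continuous with Lebesgue measure on $(-1,1)$, so that ``cap mod $\Haus^{d-1}$-null'' really does translate to ``initial segment mod Lebesgue-null'' in the radial variable.
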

\begin{proof}
The algebraic relation $s\circ\star=\star=\star\circ s$ is valid for every $s\in\CapSymm$, so it is true in $\CapSymm^\sharp$.
Let $A\in\mathscr{B}(\ambSp)$ be a Borel measurable set such that
$\Haus^d(A^s\Delta A)=0$ for every $s\in\CapSymm^\sharp$.
Assume by contradiction that $\Haus^d(A^\star\Delta A)>0$. Consider the Borel measurable set
	\[ B = A^{(-\hat{e},0)(\hat{e},0)} . \]
Since $A=B$ in $\measurable(\ambSp)$, we clearly have
$B^\star=A^\star$, $\Haus^d(B^\star\Delta B)>0$
and $\Haus^d(B^s\Delta B)=0$ for every $s\in\CapSymm^\sharp$.
Moreover, the set $B\cap\partial B(0,t)$ equals $\emptyset$ or $\partial B(0,t)$, for every $t>0$, by construction of cap symmetrizations and by assumption on $A$. Since
$\Haus^d(B)=\Haus^d(B^\star)$, the sets $B\setminus B^\star$ and $B^\star\setminus B$
share the same positive Hausdorff measure. There exists
$x_1=(\hat{e},r_1)\in B^\star\setminus B$
and $x_2=(\hat{e},r_2)\in B\setminus B^\star$, with $r_1,r_2>0$,
and such that for every $r>0$, we have
	\[ \Haus^d(\,B(x_1,r)\cap (B^\star\setminus B)\,) > 0,
		\quad \Haus^d(\,B(x_2,r)\cap (B\setminus B^\star)\,) > 0 .\]
But then we get $\Haus^d(B^{(\hat{e},\frac{r_1+r_2}{2})}\Delta B)>0$, which contradicts the construction
of $B$. Therefore, we should have $\Haus^d(B\Delta B^\star)=0$.
\end{proof}
\begin{example}[Random walk on $\CapSymm^\sharp$]
Consider a random walk $(W_n)_{n\in\integers}$ on $[0,+\infty)$
constructed as in \cref{lemma.randomWalkTruncated}.
We write
	\[ K : [0,+\infty) \times \mathscr{B}(\,[0,+\infty)\,) \to [0,1] \]
the transition function of $(W_n)_{n\in\integers}$.
Consider a Markov process $(U_n)_{n\in\integers}$ on $\{-1,1\}$,
independent of $(W_n)_{n\in\integers}$, and whose transition function
$D$ has the form
	\[ H_i(A)=\beta_i\delta_{\{i\}}(A) + (1-\beta_i)\delta_{\{-i\}}(A) ,\]
where $\beta_1,\beta_{-1}\in (0,1)$. The product process $((U_n,W_n))_{n\in\integers}$
is a Markov process on $\{-1,1\}\times [0,+\infty)$, which satisfies the following
conditions:
	\begin{itemize}
		\item the transition function $P$ of the product process satisfies
			\[ P_{(i,r)}(A) = H_i(\{1\})K_r(\pi^+(A)) + H_i(\{-1\})K_r(\pi^-(A)) ,\]
		where $\pi^-(A),\pi^+(A)\in\mathscr{B}(\,[0,+\infty)\,)$ are the unique
		Borel measurable sets that satisfies
			\[ A\cap \{-1\}\times [0,+\infty) = \{-1\}\times\pi^-(A) ,\]
			and
			\[ A\cap \{1\}\times [0,+\infty) = \{1\}\times\pi^+(A) .\]
		\item $P$ is strong Feller continuous and for every nonempty open set $\mathcal{O}\subseteq\{-1,1\}\times [0,+\infty)$, we have
		for every $(i,r)\in\{-1,1\}\times [0,+\infty)$
			\[ \sum_{n\in\integers}P^n_{(i,r)}(\,
				(\{-1,1\}\times(0,+\infty))^{n-1}\times\mathcal{O} \,) > 0 .\]
		\item the process $((U_n,W_n))_{n\in\integers}$ is nonevanescent, since
		so is $(W_n)_{n\in\integers}$.
	\end{itemize}
According to \cref{corollary.markov.quick}, for every $X\in\measurable(\ambSp)$,
the sequence of successive symmetrizations $(X^{(U_1,W_1)\dots(U_n,W_n)})_{n\in\integers}$
converges almost-surely in measure to $X^\star$.
\end{example}
\begin{counterexample}[Convergence failure for Markov process on $\CapSymm^\sharp$]
We illustrate the necessity of the discrimination condition \ref{discrimination.proposition.markov} in \cref{proposition.markov}.
We define the transition function
	\[ P : \CapSymm^\sharp \times \mathscr{B}(\CapSymm^\sharp)
		\to [0,1] \]
through the formula
	\begin{align*}
		P_{(\hat{e},r)}(A) &= \frac{1}{2} ( \delta_{\hat{e}}\otimes\mu + \delta_{-\hat{e}}\otimes\delta_{0})(A) ,
		&	P_{(-\hat{e},r)}(A) &= \frac{1}{2} ( \delta_{-\hat{e}}\otimes\mu + \delta_{\hat{e}}\otimes\delta_{0} )(A).
	\end{align*}
where $\mu:\mathscr{B}([0,+\infty))\to[0,1]$ is a strictly positive distribution. For
example, one could choose $\mu$ given by a half normal distribution
	\[ \mu(A) = \sqrt{\frac{2}{\pi}} \int\limits_A e^{-\frac{x^2}{2}} \dif x .\]
The continuity and discrimination properties are easily checked. The definition of $P$ forces the process to go on the boundary before to jump
on the other half of $\CapSymm^\sharp$.
Considering half-disks
	\begin{align*}
		D^+ &= \{(z_1,z_2)\in\reals^2: {z_1}^2+{z_2}^2<1,z_2>0 \}, \\
		D^- &=\{(z_1,z_2)\in\reals^2: {z_1}^2+{z_2}^2<1,z_2<0 \} ,
	\end{align*}
it is a straightforward computation to see that the sequences of successive
cap symmetrizations $((D^+)^{S_1\dots S_n})_{n\in\integers}$ and $((D^-)^{S_1\dots S_n})_{n\in\integers}$
alternate between $D^+$ and $D^-$, and the almost-sure convergence does not occur.
\end{counterexample}

\begin{bibdiv}

\begin{biblist}
\bib{Baernstein}{article}{
   author={Baernstein, Albert, II},
   title={A unified approach to symmetrization},
   conference={
      title={Partial differential equations of elliptic type},
      address={Cortona},
      date={1992},
   },
   book={
      series={Sympos. Math., XXXV},
      publisher={Cambridge Univ. Press, Cambridge},
   },
   date={1994},
   pages={47--91},
   %review={\MR{1297773 (96e:26019)}},
}
\bib{Shape}{article}{
   author={Bianchi, Gabriele},
   author={Burchard, Almut},
   author={Gronchi, Paolo},
   author={Vol{\v{c}}i{\v{c}}, Aljo{\v{s}}a},
   title={Convergence in shape of Steiner symmetrizations},
   journal={Indiana Univ. Math. J.},
   volume={61},
   date={2012},
   number={4},
   pages={1695--1710},
   issn={0022-2518},
   %review={\MR{3085623}},
   %doi={10.1512/iumj.2012.61.5087},
}
\bib{Klain}{article}{
   author={Bianchi, Gabriele},
   author={Klain, Daniel A.},
   author={Lutwak, Erwin},
   author={Yang, Deane},
   author={Zhang, Gaoyong},
   title={A countable set of directions is sufficient for Steiner
   symmetrization},
   journal={Adv. in Appl. Math.},
   volume={47},
   date={2011},
   number={4},
   pages={869--873},
   issn={0196-8858},
   %review={\MR{2832382 (2012i:52008)}},
   %doi={10.1016/j.aam.2011.04.005},
}
\bib{Billingsley}{book}{
   author={Billingsley, Patrick},
   title={Probability and measure},
   series={Wiley Series in Probability and Mathematical Statistics},
   edition={3},
   %note={A Wiley-Interscience Publication},
   publisher={John Wiley \& Sons, New York},
   date={1995},
   pages={xiv+593},
   isbn={0-471-00710-2},
   %review={\MR{1324786 (95k:60001)}},
}
\bib{Brascamp}{article}{
   author={Brascamp, H. J.},
   author={Lieb, Elliott H.},
   author={Luttinger, J. M.},
   title={A general rearrangement inequality for multiple integrals},
   journal={J. Functional Analysis},
   volume={17},
   date={1974},
   pages={227--237},
   %review={\MR{0346109 (49 \#10835)}},
}
\bib{BrockSolynin}{article}{
   author={Brock, Friedemann},
   author={Solynin, Alexander Yu.},
   title={An approach to symmetrization via polarization},
   journal={Trans. Amer. Math. Soc.},
   volume={352},
   date={2000},
   number={4},
   pages={1759--1796},
   issn={0002-9947},
   %review={\MR{1695019 (2001a:26014)}},
   %doi={10.1090/S0002-9947-99-02558-1},
}
\bib{BurchardFortier}{article}{
   author={Burchard, Almut},
   author={Fortier, Marc},
   title={Random polarizations},
   journal={Adv. Math.},
   volume={234},
   date={2013},
   pages={550--573},
   issn={0001-8708},
   %review={\MR{3003937}},
   %doi={10.1016/j.aim.2012.10.010},
}
\bib{CoupierDavydov}{article}{
   author={Coupier, D.},
   author={Davydov, Yu.},
   title={Random symmetrizations of convex bodies},
   journal={Adv. in Appl. Probab.},
   volume={46},
   date={2014},
   number={3},
   pages={603--621},
   issn={0001-8678},
   %review={\MR{3254333}},
   %doi={10.1239/aap/1409319551},
}
\bib{Crowe}{article}{
   author={Crowe, J. A.},
   author={Zweibel, J. A.},
   author={Rosenbloom, P. C.},
   title={Rearrangements of functions},
   journal={J. Funct. Anal.},
   volume={66},
   date={1986},
   number={3},
   pages={432--438},
   issn={0022-1236},
   %review={\MR{839110 (87j:28003)}},
   %doi={10.1016/0022-1236(86)90067-4},
}
\bib{Klainfini}{article}{
   author={Klain, Daniel A.},
   title={Steiner symmetrization using a finite set of directions},
   journal={Adv. in Appl. Math.},
   volume={48},
   date={2012},
   number={2},
   pages={340--353},
   issn={0196-8858},
   %review={\MR{2873881 (2012m:52008)}},
   %doi={10.1016/j.aam.2011.09.004},
}
\bib{LiebLoss}{book}{
   author={Lieb, Elliott H.},
   author={Loss, Michael},
   title={Analysis},
   series={Graduate Studies in Mathematics},
   volume={14},
   edition={2},
   publisher={American Mathematical Society, Providence, RI},
   date={2001},
   pages={xxii+346},
   isbn={0-8218-2783-9},
   %review={\MR{1817225 (2001i:00001)}},
   %doi={10.1090/gsm/014},
}
\bib{ManiLevitska}{article}{
   author={Mani-Levitska, Peter},
   title={Random Steiner symmetrizations},
   journal={Studia Sci. Math. Hungar.},
   volume={21},
   date={1986},
   number={3-4},
   pages={373--378},
   issn={0081-6906},
   %review={\MR{919382 (89a:52012)}},
}
\bib{Tweedie}{book}{
   author={Meyn, Sean},
   author={Tweedie, Richard L.},
   title={Markov chains and stochastic stability},
   edition={2},
   %note={With a prologue by Peter W. Glynn},
   publisher={Cambridge University Press, Cambridge},
   date={2009},
   pages={xxviii+594},
   isbn={978-0-521-73182-9},
   %review={\MR{2509253 (2010h:60206)}},
   %doi={10.1017/CBO9780511626630},
}
\bib{Pruss}{article}{
   author={Pruss, Alexander R.},
   title={Discrete convolution-rearrangement inequalities and the
   Faber-Krahn inequality on regular trees},
   journal={Duke Math. J.},
   volume={91},
   date={1998},
   number={3},
   pages={463--514},
   issn={0012-7094},
   %review={\MR{1604163 (2000d:39017)}},
   %doi={10.1215/S0012-7094-98-09119-0},
}
\bib{Sarvas}{article}{
   author={Sarvas, Jukka},
   title={Symmetrization of condensers in $n$-space},
   journal={Ann. Acad. Sci. Fenn. Ser. A I},
   number={522},
   date={1972},
   pages={44},
   %review={\MR{0348108 (50 \#606)}},
}
\bib{Smets}{article}{
   author={Smets, Didier},
   author={Willem, Michel},
   title={Partial symmetry and asymptotic behavior for some elliptic
   variational problems},
   journal={Calc. Var. Partial Differential Equations},
   volume={18},
   date={2003},
   number={1},
   pages={57--75},
   issn={0944-2669},
   %review={\MR{2001882 (2004m:35092)}},
   %doi={10.1007/s00526-002-0180-y},
}
\bib{Steiner}{article}{
        author={Steiner, Jakob},
        title={Einfacher Beweis der isoperimetrischen Haupts\"{a}tze},
        journal={Crelle J. Reine Angew. Math},
        number={18},
        pages={281-296},
        date={1838}
}
\bib{Talenti}{article}{
   author={Talenti, Giorgio},
   title={The standard isoperimetric theorem},
   conference={
      title={Handbook of convex geometry, Vol.\ A},
   },
   book={
      publisher={North-Holland, Amsterdam},
   },
   date={1993},
   pages={73--123},
   %review={\MR{1242977 (94h:49065)}},
   %doi={10.1016/B978-0-444-89596-7.50008-0},
}
\bib{vanSchaftAnisotropic}{article}{
   author={Van Schaftingen, Jean},
   title={Anisotropic symmetrization},
   %language={English, with English and French summaries},
   journal={Ann. Inst. H. Poincar\'e Anal. Non Lin\'eaire},
   volume={23},
   date={2006},
   number={4},
   pages={539--565},
   issn={0294-1449},
   %review={\MR{2245755 (2007b:42029)}},
   %doi={10.1016/j.anihpc.2005.06.001},
}
\bib{vanSchaftApproxSymm}{article}{
   author={Van Schaftingen, Jean},
   title={Approximation of symmetrizations and symmetry of critical points},
   journal={Topol. Methods Nonlinear Anal.},
   volume={28},
   date={2006},
   number={1},
   pages={61--85},
   issn={1230-3429},
   %review={\MR{2262256 (2007i:35045)}},
}
\bib{vanSchaftUnivApprox}{article}{
   author={Van Schaftingen, Jean},
   title={Universal approximation of symmetrizations by polarizations},
   journal={Proc. Amer. Math. Soc.},
   volume={134},
   date={2006},
   number={1},
   pages={177--186},
   issn={0002-9939},
   %review={\MR{2170557 (2006k:26026)}},
   %doi={10.1090/S0002-9939-05-08325-5},
}
\bib{vanSchaftExplicit}{article}{
   author={Van Schaftingen, Jean},
   title={Explicit approximation of the symmetric rearrangement by
   polarizations},
   journal={Arch. Math. (Basel)},
   volume={93},
   date={2009},
   number={2},
   pages={181--190},
   issn={0003-889X},
   %review={\MR{2534427 (2010h:26037)}},
   %doi={10.1007/s00013-009-0018-3},
}
\bib{Volcic}{article}{
   author={Vol{\v{c}}i{\v{c}}, Aljo{\v{s}}a},
   title={Random Steiner symmetrizations of sets and functions},
   journal={Calc. Var. Partial Differential Equations},
   volume={46},
   date={2013},
   number={3-4},
   pages={555--569},
   issn={0944-2669},
   %review={\MR{3018162}},
   %doi={10.1007/s00526-012-0493-4},
}
\bib{Weth}{article}{
   author={Weth, Tobias},
   title={Symmetry of solutions to variational problems for nonlinear
   elliptic equations via reflection methods},
   journal={Jahresber. Dtsch. Math.-Ver.},
   volume={112},
   date={2010},
   number={3},
   pages={119--158},
   issn={0012-0456},
   %review={\MR{2722502 (2011g:35014)}},
   %doi={10.1365/s13291-010-0005-4},
}
\bib{Willem}{book}{
   author={Willem, Michel},
   title={Functional analysis},
   series={Cornerstones},
   %note={Fundamentals and applications},
   publisher={Birkh\"auser/Springer, New York},
   date={2013},
   pages={xiv+213},
   isbn={978-1-4614-7003-8},
   isbn={978-1-4614-7004-5},
   %review={\MR{3112778}},
   %doi={10.1007/978-1-4614-7004-5},
}
\end{biblist}

\end{bibdiv}

\end{document}